\providecommand{\tabularnewline}{\\}
\numberwithin{equation}{section}
\numberwithin{figure}{section}
\theoremstyle{plain}
\newtheorem{thm}{\protect\theoremname}[section]
\theoremstyle{plain}
\newtheorem{prop}[thm]{\protect\propositionname}
\theoremstyle{plain}
\newtheorem{lem}[thm]{\protect\lemmaname}
\providecommand{\lemmaname}{Lemma}
\providecommand{\propositionname}{Proposition}
\providecommand{\theoremname}{Theorem}
\begin{document}
\global\long\def\F{\mathcal{F} }%
\global\long\def\Aut{\mathrm{Aut}\chi}%
\global\long\def\C{\mathbb{C}}%
\global\long\def\H{\mathcal{H}}%
\global\long\def\U{\mathcal{U}}%
\global\long\def\P{\mathcal{P}}%
\global\long\def\ext{\mathrm{ext}}%
\global\long\def\hull{\mathrm{hull}}%
\global\long\def\triv{\mathrm{triv}}%
\global\long\def\Hom{\mathrm{Hom}}%

\global\long\def\trace{\mathrm{tr}}%
\global\long\def\End{\mathrm{End}}%

\global\long\def\L{\mathcal{L}}%
\global\long\def\W{\mathcal{W}}%
\global\long\def\E{\mathbb{E}}%
\global\long\def\SL{\mathrm{SL}}%
\global\long\def\R{\mathbb{R}}%
\global\long\def\Z{\mathbf{Z}}%
\global\long\def\rs{\to}%
\global\long\def\A{\mathcal{A}}%
\global\long\def\a{\mathbf{a}}%
\global\long\def\rsa{\rightsquigarrow}%
\global\long\def\D{\mathbf{D}}%
\global\long\def\b{\mathbf{b}}%
\global\long\def\df{\mathrm{def}}%
\global\long\def\eqdf{\stackrel{\df}{=}}%
\global\long\def\ZZ{\mathcal{Z}}%
\global\long\def\Tr{\mathrm{Tr}}%
\global\long\def\N{\mathbb{N}}%
\global\long\def\std{\mathrm{std}}%
\global\long\def\HS{\mathrm{H.S.}}%
\global\long\def\e{\varepsilon}%
\global\long\def\c{\mathbf{c}}%
\global\long\def\d{\mathbf{d}}%
\global\long\def\AA{\mathbf{A}}%
\global\long\def\BB{\mathbf{B}}%
\global\long\def\u{\mathbf{u}}%
\global\long\def\v{\mathbf{v}}%
\global\long\def\spec{\mathrm{spec}}%
\global\long\def\Ind{\mathrm{Ind}}%
\global\long\def\half{\frac{1}{2}}%
\global\long\def\Re{\mathrm{Re}}%
\global\long\def\Im{\mathrm{Im}}%
\global\long\def\p{\mathfrak{p}}%
\global\long\def\j{\mathbf{j}}%
\global\long\def\uB{\underline{B}}%
\global\long\def\tr{\mathrm{tr}}%
\global\long\def\rank{\mathrm{rank}}%
\global\long\def\K{\mathcal{K}}%
\global\long\def\hh{\mathcal{H}}%
\global\long\def\h{\mathfrak{h}}%

\global\long\def\EE{\mathcal{E}}%
\global\long\def\PSL{\mathrm{PSL}}%
\global\long\def\G{\mathcal{G}}%
\global\long\def\Int{\mathrm{Int}}%
\global\long\def\acc{\mathrm{acc}}%
\global\long\def\awl{\mathsf{awl}}%
\global\long\def\even{\mathrm{even}}%
\global\long\def\z{\mathbf{z}}%
\global\long\def\id{\mathrm{id}}%
\global\long\def\CC{\mathcal{C}}%
\global\long\def\cusp{\mathrm{cusp}}%
\global\long\def\new{\mathrm{new}}%

\global\long\def\LL{\mathbb{L}}%
\global\long\def\M{\mathbf{M}}%
\global\long\def\I{\mathcal{I}}%
\global\long\def\X{X}%
\global\long\def\free{\mathbf{F}}%
\global\long\def\into{\hookrightarrow}%
\global\long\def\Ext{\mathrm{Ext}}%
\global\long\def\B{\mathcal{B}}%
\global\long\def\Id{\mathrm{Id}}%
\global\long\def\Q{\mathbb{Q}}%

\global\long\def\O{\mathcal{T}}%
\global\long\def\Mat{\mathrm{Mat}}%
\global\long\def\NN{\mathrm{NN}}%
\global\long\def\nn{\mathfrak{nn}}%
\global\long\def\Tr{\mathrm{Tr}}%
\global\long\def\SGRM{\mathsf{SGRM}}%
\global\long\def\m{\mathbf{m}}%
\global\long\def\n{\mathbf{n}}%
\global\long\def\k{\mathbf{k}}%
\global\long\def\GRM{\mathsf{GRM}}%
\global\long\def\vac{\mathrm{vac}}%
\global\long\def\SS{\mathcal{S}}%
\global\long\def\red{\mathrm{red}}%
\global\long\def\V{V}%
\global\long\def\SO{\mathrm{SO}}%
\global\long\def\Gd{\Gamma^{\vee}}%
\global\long\def\fd{\mathrm{fd}}%
\global\long\def\perm{\mathrm{perm}}%
\global\long\def\tos{\xrightarrow{\mathrm{strong}}}%
\global\long\def\HH{\mathbb{H}}%
\global\long\def\T{\mathcal{T}}%
\global\long\def\Jac{\mathsf{Jac}}%
\global\long\def\Ham{\mathsf{Ham}}%
\global\long\def\chit{\chi}%

\vspace{-5in} 
\title{The limit points of the bass notes of arithmetic hyperbolic surfaces}
\author{Michael Magee\\
\emph{\small{}To Peter Sarnak on the occasion of his 70th birthday}}
\maketitle
\begin{abstract}
We prove that the limit points of the bass notes of arithmetic hyperbolic
surfaces are the interval $\left[0,\frac{1}{4}\right]$.{\footnotesize{}\tableofcontents{}}{\footnotesize\par}
\end{abstract}

\section{Introduction}
\begin{thm}
\label{thm:main}Let 
\[
\mathsf{BASS}(\mathsf{arith}_{2})\eqdf\{\,\lambda_{1}(\Lambda\backslash\mathbb{H})\,:\,\Lambda\text{ is an arithmetic lattice in \ensuremath{\PSL_{2}(\R)\,\}.}}
\]
The limit points of $\mathsf{BASS}(\mathsf{arith}_{2})$ are the interval
$\left[0,\frac{1}{4}\right]$. In fact this is also true if we restrict
to $\Lambda\leq\PSL_{2}(\Z)$.
\end{thm}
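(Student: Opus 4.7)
The theorem has two directions: an upper bound that every accumulation point of $\mathsf{BASS}(\mathsf{arith}_2)$ lies in $[0,\tfrac{1}{4}]$, and a density statement that every $\lambda_0 \in [0,\tfrac{1}{4}]$ is such an accumulation point, with witnessing lattices inside $\PSL_2(\Z)$.

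For the upper bound I would argue as follows. Since arithmetic lattices of bounded covolume form a finite set (Wang / Borel), any sequence $\{\Lambda_n\}$ producing a new accumulation point must have $\mathrm{vol}(\Lambda_n \backslash \HH) \to \infty$. For any such sequence, an Alon--Boppana type bound---deducible by transplanting a $\tfrac14$-generalized eigenfunction (say $y^{1/2}$ or a spherical function with parameter $0$) on a large hyperbolic ball into $\Lambda_n \backslash \HH$ and applying the Rayleigh quotient---yields $\lambda_1(\Lambda_n \backslash \HH) \leq \tfrac{1}{4} + o(1)$. Hence any accumulation point lies in $[0,\tfrac{1}{4}]$.

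For density, for each target $\lambda_0 \in [0,\tfrac{1}{4}]$ I would build a sequence of finite-index subgroups $\Lambda_n \leq \PSL_2(\Z)$ with $\lambda_1(\Lambda_n \backslash \HH) \to \lambda_0$, exploiting the parameterization of such subgroups by transitive pairs $(\sigma,\tau) \in S_n \times S_n$ with $\sigma^2 = \tau^3 = \id$. The plan is a ``base $+$ feature'' construction: a base cover of the modular surface engineered to have spectral gap close to $\tfrac{1}{4}$ (available via strong convergence or random cover techniques for the free product $\Z/2 \ast \Z/3$) is modified by attaching, through a long thin geometric neck, a small feature subsurface carrying a single low eigenvalue close to $\lambda_0$. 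Exponentially small spectral coupling through the neck, combined with the min-max principle, forces the first eigenvalue of the combined cover to approach $\lambda_0$.

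The main obstacle will be the construction and analysis of the feature subsurface. Producing a piece with first eigenvalue near an arbitrary $\lambda_0 \in (0,\tfrac{1}{4})$ cannot use congruence data (ruled out by Selberg's $\tfrac{3}{16}$ bound) and must exploit non-congruence covers, whose Laplacian spectra are notoriously less tractable. A natural route is to build the feature from a geometric construction with a continuously tunable parameter---the length of a pinched cycle, the depth of a cusp truncation, or the Teichm\"uller parameter of a building block---and then to assemble the global permutation data $(\sigma_n,\tau_n)$ so that the associated cover factors through the desired geometric decomposition. Controlling $\lambda_1$ tightly enough to obtain true convergence to $\lambda_0$, rather than containment in a small window, and realizing the entire picture strictly within $\PSL_2(\Z)$, is the core technical difficulty.
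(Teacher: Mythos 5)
Your upper-bound direction is sound and essentially matches the paper's remarks: covolume must tend to infinity (Borel/Wang), and then Huber-type transplantation of a $\tfrac14$-generalized eigenfunction gives $\limsup\lambda_1\le\tfrac14$. You also correctly identify the high-level strategy for density: interpolate between a spectral-gap endpoint near $\tfrac14$ (available from strong convergence of random covers, as in Hide--Magee and Bordenave--Collins) and a small-$\lambda_1$ endpoint, all inside $\PSL_2(\Z)$.

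The gap is in the interpolation mechanism, and it is fatal to the route you propose. You suggest attaching a ``feature'' subsurface through a thin neck, with the feature's first eigenvalue tuned by a \emph{continuous} geometric parameter (length of a pinched cycle, Teichm\"uller coordinate, cusp truncation depth). But covers of $\Gamma(2)\backslash\HH$ (equivalently, finite-index subgroups of $\PSL_2(\Z)$) form a discrete set; there are no continuously tunable lengths or moduli available once you insist the surface be such a cover. You acknowledge this as ``the core technical difficulty'' and propose to ``assemble the global permutation data so that the cover factors through the desired geometric decomposition,'' but this is precisely the step that has no realization: the geometry of a cover is fully determined by the permutation data, and none of it moves continuously. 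Selberg's bound rules out congruence features, as you note, but the problem is deeper than that---it is the rigidity of arithmetic covers.

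The paper's innovation is to replace continuous geometric tuning by a \emph{discrete} combinatorial interpolation that never changes the underlying surface. On a single good random cover $X_\phi$ one considers the finite group $\Hom(\Gamma_\phi,\mu_2)$ of $2$-torsion characters (equivalently, degree-two covers, equivalently, real flat line bundles $\L_\chi$). The two endpoints are the trivial character, where $\lambda_1(\L_{\chi_0})=0$, and a character $\chi(\theta)$ pulled back from a degree-two congruence cover of the base, where strong convergence plus Fell absorption gives $\lambda_1(\L_{\chi(\theta)})\ge\tfrac14-o(1)$. The key technical estimate (Theorem \ref{thm:continuity}) is that flipping a \emph{single} entry of the character with respect to a suitable generating set changes $\lambda_1$ by at most $C\,n^{-c\sqrt{1/4-\lambda}}$. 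This is proved by localizing the eigensection away from the geodesic whose pairing is being flipped, using a pre-trace inequality (sup-norm bounds of GLST/Gamburd type) under a geometric tangle-freeness condition, then IMS localization and Gamburd's collar lemma for the cusps. Walking a Hamming geodesic from $\chi_0$ to $\chi(\theta)$ then gives an $\eta$-dense set of $\lambda_1$-values in $[0,\tfrac14]$, each realized by a genuine double cover $X_{\phi,\chi}\le\PSL_2(\Z)\backslash\HH$. None of this appears in your proposal, and the thin-neck picture you describe does not survive the arithmetic constraint. If you want to salvage the ``base $+$ feature'' intuition, the right way to think about it is that the ``feature'' is a twist of the flat structure, not a geometric appendage, and the ``continuous parameter'' is replaced by discrete small-step combinatorial continuity together with quantitative delocalization of low eigensections.
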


Here $\lambda_{1}(Y)$ is the bottom of the non-trivial\footnote{A multiplicity-one zero is removed.}
spectrum of the Laplace--Beltrami operator on $Y$. 

This theorem confirms a prediction of Sarnak that is part of his program
`Prescribing the spectra of locally uniform geometries'. This program
was described during his Chern lectures \cite{SarnakChernLecture}
(U.C. Berkeley, Jan.-Feb. 2023) and encompasses, among other things,
the geometry of numbers, the Markoff spectrum, the Oppenheim conjecture,
and results about Euclidean sphere packings in one viewpoint. Let
us now point to the backdrop of Theorem \ref{thm:main}.

\uline{0.} The fact that $0$ is a limit point of $\mathsf{BASS}(\mathsf{arith}_{2})$
follows from an old result due to Randol \cite{Randol} and independently,
Selberg \cite{SelbergFourier}. 

\uline{1.} Selberg's Eigenvalue Conjecture \cite{SelbergFourier}
says that $\frac{1}{4}$ is the only possible value of $\lambda_{1}(\Lambda\backslash\HH)$
when $\Lambda$ is a \uline{congruence} subgroup of $\PSL_{2}(\Z)$.
The best progress towards this conjecture is Kim--Sarnak \cite[App. 2]{KIM}
giving a lower bound of $\frac{975}{4096}$.

\uline{2.} There are obviously only finitely many subgroups of
$\SL_{2}(\Z)$ with a given upper bound on their index and hence covolume
in $\mathbb{H}$. Hence in establishing Theorem \ref{thm:main} one
must have 
\[
\mathrm{vol}\left(\Lambda_{i}\backslash\mathbb{H}\right)\to\infty.
\]
If one extends consideration to all arithmetic lattices in $\PSL_{2}(\R)$
the same is true by a result of Borel \cite{Borel}. 

\uline{3.} It is clear that for non-compact lattices in $\PSL_{2}(\R)$,
$\lambda_{1}(\Lambda\backslash\mathbb{H})\in\left[0,\frac{1}{4}\right]$
since there is continuous spectrum of the Laplacian with support $\big[\frac{1}{4},\infty\big)$
\cite{LP}. For cocompact arithmetic lattices, a result of Huber \cite{Huber}
states $\limsup\lambda_{1}(\Lambda\backslash\HH)\leq\frac{1}{4}$
for any sequence with covolume tending to infinity; combined with
the aforementioned result of Borel, this means any limit point of
$\lambda_{1}$ must be in $\left[0,\frac{1}{4}\right]$.

\uline{4.} The fact that $\frac{1}{4}$ is a limit point of $\mathsf{BASS}(\mathsf{arith}_{2})$
follows from work of Hide and the author \cite{HideMagee}. If $\mathsf{BASS(arith_{2}^{c})}$
denotes the bass notes of compact arithmetic surfaces then $\frac{1}{4}$
is a limit point of $\mathsf{BASS(arith_{2}^{c})}$ by work of Hide,
Louder, and the author \cite{louder2023strongly}. It seems possible
to combine the methods of the current paper with estimates towards
the Selberg conjecture \cite{KIM} (and Jacquet--Langlands \cite{JacquetLanglands})
to prove the limit points of $\mathsf{BASS(arith_{2}^{c})}$ contain
the interval $\left[0,\frac{975}{4096}\right]$. There is somewhat
of a barrier\footnote{It seems to require knowing something like this. For $\Gamma_{2}$
a surface group of genus 2, do there exist a sequence of $\phi_{i}:\Gamma_{2}\to S_{n_{i}}$
which are tangle-free on scale $c(\log n_{i})^{\alpha}$, $c,\alpha>0$,
and such that $\std\circ\phi_{i}:\Gamma_{2}\to\U(n_{i}-1)$ strongly
converge to the regular representation of $\Gamma_{2}$?} to use strong convergence as in this paper to obtain the maximal
interval $[0,\frac{1}{4}]$ as limit points of $\mathsf{BASS(arith_{2}^{c})}$,
this therefore remains a very interesting open problem.

The result corresponding to Theorem \ref{thm:main} for $d$-regular
graphs, $d\geq3$ fixed, in place of arithmetic hyperbolic surfaces
has recently been proved by Alon and Wei \cite{AlonWei}. See $\S\S$\ref{subsec:Related-works}
for a more detailed comparison between these works.

On route to Theorem \ref{thm:main} we prove a theorem that may be
of independent interest. Before stating it we recall some notation.
Given a finite set $S$ and two functions $\xi_{1},\xi_{2}:S\to\{1,-1\}$,
the \emph{Hamming distance} $d_{\Ham}(\xi_{1},\xi_{2})$ between $\xi_{1}$
and $\xi_{2}$ is by definition the number of points in $S$ at which
the functions disagree.

Given a finite-area hyperbolic surface $Y=\Gamma\backslash\HH$, let
\[
\Jac^{(2)}(Y)\eqdf\Hom(\Gamma,\Z/2\Z)
\]
 be the 2-torsion in the Jacobian variety of $Y$. Given any $\chi\in\Jac^{(2)}(Y)$
there is an Hermitian line bundle $\L_{\chi}$ over $Y$ associated
to $\chi$ and with associated Laplacian operator etc; let $\lambda_{1}(\L_{\chi})$
denote the smallest eigenvalue of this operator, if it exists, or
$\frac{1}{4}$ otherwise. Given a finite generating set $\B$ of $\Gamma$,
and $\chi_{1},\chi_{2}\in\Jac^{(2)}(Y)$ let 
\begin{align*}
d_{\Ham}^{\B}(\chi_{1},\chi_{2}) & \eqdf d_{\Ham}\text{(}(\chi_{1}(b):b\in\B),(\chi_{2}(b):b\in\B)).
\end{align*}
Our main technical contribution is the following.
\begin{thm}
\label{thm:continuity}Let $X=\Gamma(2)\backslash\HH$. There is $c>0$
such that the following holds. For any $\lambda_{0}<\frac{1}{4}$,
there is $C(\lambda_{0})>0$ such that if $Y_{n}$ is a uniformly
random degree $n$ cover of $Y$ then with probability tending to
one as $n\to\infty$ there is a generating set $\B_{n}$ of $\pi_{1}(Y_{n})$
such that if $d_{\Ham}^{\B_{n}}(\chi_{1},\chi_{2})=1$ and $\lambda_{1}(\L_{\chi})\leq\lambda_{0}$
then
\[
|\lambda_{1}(\L_{\chi_{1}})-\lambda_{1}(\L_{\chi_{2}})|<C(\lambda_{0})n^{-c\sqrt{\frac{1}{4}-\lambda}}.
\]
\end{thm}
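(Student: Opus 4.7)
The plan is a Rayleigh--Ritz test-function argument combined with an exponential-decay estimate for low-energy eigensections of $\L_\chi$. Since Theorem~\ref{thm:continuity} only requires the existence of \emph{some} generating set $\B_n$, one is free to choose it adapted to the cover: I would take $\B_n$ to be the Nielsen generating set coming from the non-tree edges of a spanning tree in the Schreier coset graph of $\pi_1(X) = F_2$ acting on $[n]$, and attach to each $b \in \B_n$ a dual cut arc $\gamma_b \subset Y_n$ such that the $\Z/2$-character $b \mapsto -1$ is realised as the flat holonomy of the bundle obtained by gluing with sign $-1$ across $\gamma_b$. Any $\chi \in \Jac^{(2)}(Y_n)$ is then represented by flipping across the subset $\{\gamma_b : \chi(b) = -1\}$, and two characters at Hamming distance $1$ on $\B_n$ correspond to flat bundles differing by a sign flip across a single arc $\gamma_{b_0}$.

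Given a normalised eigensection $\phi_1$ of $\L_{\chi_1}$ with eigenvalue $\lambda \leq \lambda_0$, set $\psi = \eta \phi_1$, where $\eta$ is a smooth $\{-1,+1\}$-valued sign-flip whose derivative is supported in a hyperbolically unit-scale collar $N(\gamma_{b_0})$ (viewing $\phi_1$ in a trivialisation across the cut). A standard integration by parts gives
\[
\langle \L_{\chi_2} \psi, \psi \rangle \;=\; \lambda \|\psi\|^2 \;+\; \int |\nabla \eta|^2 |\phi_1|^2 \, d\mathrm{vol},
\]
so the min-max principle yields
\[
\lambda_1(\L_{\chi_2}) \;\leq\; \lambda_1(\L_{\chi_1}) \;+\; C \, \|\phi_1\|_{L^2(N(\gamma_{b_0}))}^2 \Big/ \|\phi_1\|^2,
\]
and symmetry in $\chi_1 \leftrightarrow \chi_2$ gives the two-sided bound. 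The theorem reduces to showing
\[
\|\phi_1\|_{L^2(N(\gamma_{b_0}))}^2 \;\leq\; C_0 \, n^{-2c\sqrt{1/4-\lambda}} \, \|\phi_1\|^2.
\]

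This decay estimate has two flavours. If $\gamma_{b_0}$ is the dual of a peripheral generator, it may be pushed deep into a cusp at cuspidal height $Y \asymp n^c$; the cusp Fourier expansion of $\phi_1$---zero mode $y^{1/2-s}$ with $s = \sqrt{1/4-\lambda}$, nonzero modes exponentially decaying Bessel functions---then gives $\|\phi_1\|_{L^2(y > Y)}^2 \lesssim Y^{-2s} = n^{-2cs}$. For genus generators, whose dual arcs cannot be pushed into a cusp, I would instead run an Agmon-type weighted identity: on a tangle-free subdomain $U \subset Y_n$ of diameter $\leq c \log n$ containing $\gamma_{b_0}$, $U$ lifts isometrically to a subset of $\HH$ and hence has Dirichlet spectrum at least $\tfrac14$. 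Testing the eigenfunction equation against $e^{2\alpha d(\cdot, K)} \phi_1$ for a bounded-diameter reservoir region $K \subset Y_n$ and any $\alpha < s$, the spectral gap $\tfrac14 - \lambda$ overpowers the Agmon derivative term $|\nabla(e^{\alpha d})|^2 / e^{2\alpha d} = \alpha^2$ and produces exponential decay of $\phi_1$ away from $K$ at rate $s$ in hyperbolic distance, giving the factor $n^{-2cs}$ at distance $c \log n$.

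The main obstacle is the Agmon step for non-peripheral generators: one must uniformly locate, for every low-lying eigensection $\phi_1$, a reservoir $K$ of bounded diameter that captures its effective $L^2$ mass, and arrange $\B_n$ so that every flip arc $\gamma_b$ sits at hyperbolic distance $\geq c \log n$ from $K$ inside a tangle-free subdomain with Dirichlet spectrum $\geq \tfrac14$. Both points should follow from tangle-freeness and from a careful choice of the spanning tree underlying $\B_n$ that does not revisit cusp neighborhoods or short thin regions more than necessary; the remainder is standard Agmon bookkeeping.
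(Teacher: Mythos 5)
Your overall strategy mirrors the paper's: choose $\B_n$ from a spanning tree of the Schreier graph, realize a Hamming-distance-one step as a sign flip across a single cut arc, test with $\psi=\eta\phi_1$, and reduce to an $L^2$ decay estimate for $\phi_1$ near the cut. Your integration-by-parts identity $\langle\Delta(\eta\phi_1),\eta\phi_1\rangle=\lambda\|\eta\phi_1\|^2+\int|\nabla\eta|^2|\phi_1|^2$ is correct, and it is morally equivalent to the paper's two-stage construction (first damping $f$ in the cusps, then killing it in a Fermi tube around the infinite geodesic $\EE$ dual to the flipped non-tree edge, then re-gluing with the opposite sign). So the reduction is sound.

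The gap is in the decay estimate on the compact part of the cut. You propose Agmon decay away from ``a reservoir $K$ of bounded diameter that captures the effective $L^2$ mass of $\phi_1$,'' with the spanning tree chosen so that every flip arc avoids $K$ by distance $\gtrsim c\log n$. Neither half can be arranged. Low-lying eigensections need not concentrate in any bounded region: for $\chi$ one Hamming step from the trivial character the ground eigenvalue is $O(n^{-c'\sqrt{1/4}})$ and the eigensection is nearly constant, spread over a surface of area $\sim n$; there is no reservoir. And the $\sim n$ non-tree edges of the spanning tree are distributed through a graph of diameter $\sim\log n$, so they cannot all avoid any fixed bounded set. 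More fundamentally, Agmon decay needs a classically forbidden region, and the core of the surface has the same local geometry everywhere---the only forbidden regions are the cusps, which is exactly where your Fourier-expansion argument does apply. The paper avoids this entirely by proving a \emph{pointwise} bound $|f(z)|^2\ll n^{-c'\sqrt{1/4-\lambda}}$ valid for every $z$ in the core, via the pre-trace inequality combined with tangle-freeness (Proposition~\ref{prop:Linfty-bound}); this requires no concentration hypothesis, and together with the transition region having area $O(\log n)$ it immediately bounds $\int|\nabla\eta|^2|\phi_1|^2$. That pre-trace $L^\infty$ bound is the essential technical input your proposal is missing, and an Agmon-type argument cannot substitute for it here.
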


\uline{Remark} While stated as a probabilistic result, the proof
relies only on two deterministic conditions: that $Y_{n}$ is connected,
and that $Y_{n}$ satisfies \textbf{GTF($K\log n$) }of $\S$\ref{sec:Probabilistic-input}
for some uniform $K>0$ not depending on $n$.

We prove that \textbf{GTF($K\log n)$} holds for random covers $Y_{n}$
as above for some uniform $K>0$. This is a geometric version of the
tangle free condition used by Friedman \cite{Friedman}. A geometric
version of this property was proven to hold for compact Weil--Petersson
random surfaces by Monk and Thomas \cite{MonkThomas} (see also \cite{Monk,GLST}).
A very recent and independent preprint of Klukowski and Markovi\'{c}
\cite{KlukowskiMarkovic} proves a related geometric property ---
the `$L$-horoball property' --- of random covers of a cusped hyperbolic
surface.

\subsection{Related works\label{subsec:Related-works} }

\textbf{Graphs. }As we mentioned before, the analog of Theorem \ref{thm:main}
for all $d$-regular graphs was proved by Alon and Wei \cite{AlonWei}.

Our approach certainly has the same overall strategy as Alon--Wei
(a type of discrete continuity between two known end points) but differs
in several ways:
\begin{itemize}
\item We use two--covers rather than flipping edges in graphs. While it
is possible flipping edges could have worked in our setting, using
two-covers provides an alternative method that possibly extends more
easily to more general manifolds e.g. hyperbolic three-manifolds.
We plan to return to this topic. 
\item We produce (doubly) exponentially stronger bounds on the type of continuity
one can obtain. This is at least partly because we relax girth conditions
to tangle-free conditions, but are able to use them in a similar way.
\item Obviously, we have to deal with all complications arising from the
geometric setting, with the especial complication that our surfaces
are not compact (this complication was chosen in favor of having non-free
fundamental groups, which is a separate issue in the compact case).
\end{itemize}
In the setting of $d$-regular graphs, one could also ask about the
bass notes of only \uline{arithmetic}\emph{ }graphs (those that
arise as locally symmetric spaces attached to arithmetic lattices
in e.g. $\mathrm{GL_{2}}(\Q_{p}))$. This can likely be dealt with
by covering spaces as we do here. In this case it may also be useful
to use two-covers as in this paper instead of flips.

\textbf{Surfaces. }In the context of (compact) hyperbolic orbifolds,
there is the analog question of Theorem \ref{thm:main} for \uline{all}
compact hyperbolic orbifolds. This has been studied by Kravchuk, Mazac,
and Pal \cite{Kravchuk:2021akc} who obtain rigorous bounds on Laplacian
spectra via a framework inspired by the conformal bootstrap in conformal
field theory in conjunction with linear programming. This allows them
to formulate a precise conjecture: if 
\[
\mathsf{BASS}(\mathsf{hyp}_{2}^{c})\eqdf\{\,\lambda_{1}(\Lambda\backslash\mathbb{H})\,:\,\Lambda\text{ a cocompact lattice in \ensuremath{\PSL_{2}(\R)\,\}}},
\]
then the limit points of $\mathsf{BASS}(\mathsf{hyp}_{2}^{c})$ should
be an interval 
\[
[0,15.79023...]
\]
where the right hand endpoint is $\lambda_{1}$ of a specific hyperbolic
orbifold. In \emph{(ibid.)} this conjecture is reduced \cite[Thm. 4.3]{Kravchuk:2021akc}
to four inequalities concerning $\lambda_{1}$ of specific orbifolds
(that should be numerically check-able) plus a conjecture about the
global maximum of $\lambda_{1}$ on the moduli space of genus $0$
orbifolds with orbifold points of orders $(2,2,2,3)$.

\textbf{Discrete bass note spectra.} While the current work addresses
the \emph{limit points }of $\mathsf{BASS}(\mathsf{arith}_{2})$, and
completes the picture for non-compact surfaces\footnote{At least, without beginning a discussion about cusp forms.},
in the case of compact surfaces the \emph{discrete }points of $\mathsf{BASS}(\mathsf{arith}_{2}^{c})$
and $\mathsf{BASS}(\mathsf{hyp}_{2}^{c})$ are also of basic interest.
Sarnak conjectures \cite{SarnakChernLecture} that there are infinitely
many such discrete points of $\mathsf{BASS}(\mathsf{arith}_{2}^{c})$:
concretely, there are infinitely many values $\lambda=\lambda_{1}(Y)>\frac{1}{4}$
with $Y$ arithmetic compact. This remains open.

In the setting of $\mathsf{BASS}(\mathsf{hyp}_{2}^{c})$, in contrast
to the arithmetic setting, Kravchuk, Mazac, and Pal \cite{Kravchuk:2021akc}
explain that there should be three (precisely known) discrete points
of $\mathsf{BASS}(\mathsf{hyp}_{2}^{c})$ and this is proved modulo
resolving the same issues as previously discussed in the context of
\emph{(ibid.).}

\subsection{Acknowledgments}

We thank Noga Alon, Charles Bordenave, Benoit Collins, Vlad Markovi\'{c},
Doron Puder, Peter Sarnak, and Joe Thomas for conversations about
this work.

\uline{Funding}: This material is based upon work supported by
the National Science Foundation under Grant No. DMS-1926686. This
project has received funding from the European Research Council (ERC)
under the European Union\textquoteright s Horizon 2020 research and
innovation programme (grant agreement No 949143).

\subsection*{Notation}

If $S$ is a finite set we write $\ell_{0}^{2}(S)$ for the set of
$\ell^{2}$ functions on $S$ that are orthogonal to constant functions.
We use the notation $[n]\eqdf\{1,\ldots,n\}$ for $n\in\N$.

\section{Set up\label{sec:Set-up}}

Let $\Gamma\eqdf\Gamma(2)\subset\PSL_{2}(\Z)$ be the (projective
image of the) principal congruence subgroup of level 2. This $\Gamma$
is free of rank 2 on the generators
\[
a\eqdf\left(\begin{array}{cc}
1 & 2\\
0 & 1
\end{array}\right),\quad b\eqdf\left(\begin{array}{cc}
1 & 0\\
2 & 1
\end{array}\right),
\]
and $X\eqdf\Gamma\backslash\mathbb{H}$ is a hyperbolic 3-times punctured
sphere.

\begin{figure}
\begin{centering}
\-\-\-\includegraphics{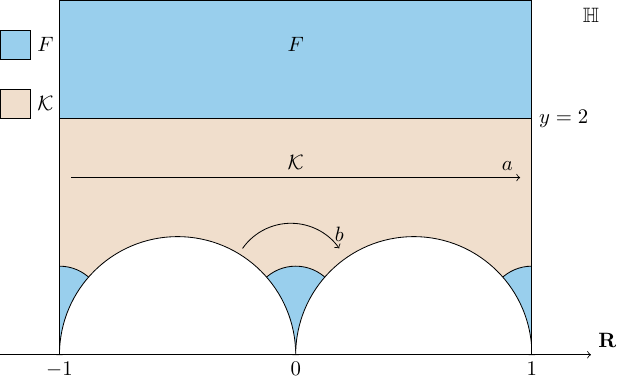}
\par\end{centering}
\centering{}\caption{$F$, fundamental domain for $\Gamma$\label{fig:,-fundamental-domain}}
\end{figure}

We use $\phi$ to denote a homomorphism $\phi:\Gamma\to S_{n}$. Given
$\phi$, $\Gamma$ acts on $\mathbb{H}\times[n]$ by $\gamma(z,x)\eqdf(\gamma z,\phi(\gamma)x)$.
We denote by
\[
X_{\phi}\eqdf\Gamma\backslash_{\phi}\left(\mathbb{H}\times[n]\right)
\]
the quotient by this action. The resulting $X_{\phi}$ is a degree
$n$ Riemannian covering space of $X$ and as such a hyperbolic surface.
If $X_{\phi}$ is connected --- later we will ensure this --- then
\[
X_{\phi}\cong\Gamma_{\phi}\backslash\mathbb{H}
\]
where $\Gamma_{\phi}$ is the stabilizer subgroup
\[
\Gamma_{\phi}\eqdf\mathrm{Stab}_{\phi}(1).
\]
Let 
\[
\mu_{2}=\mu_{2}(\C)\eqdf(\{\pm1\},\times)
\]
 be the multiplicative group of the complex square roots of unity.

Now consider a general homomorphism $\chit:\Gamma_{\phi}\to\mu_{2}$.
If $\chi$ is non-trivial, it defines a (Galois) double cover $X_{\phi,\chit}$
of $X_{\phi}$ via
\[
X_{\phi,\chit}\eqdf\ker(\chi)\backslash\HH.
\]
Smooth functions on $X_{\phi,\chit}$ that are odd under the deck
transformation involution correspond to smooth sections of the Hermitian
line bundle 
\[
\C\to\L_{\chit}\to X_{\phi}
\]
associated to $\chit$. This correspondence intertwines the following
table of differential operators (acting on local trivializations of
sections).
\begin{center}
\begin{tabular}{|c|c|}
\hline 
$C_{\mathrm{odd}}^{\infty}(X_{\phi,\chit})$ & $C^{\infty}(X_{\phi};\L_{\chit})$\tabularnewline
\hline 
\hline 
$\nabla_{X_{\phi,\chit}}$ & $\nabla_{X_{\phi}}$\tabularnewline
\hline 
$\Delta_{X_{\phi,\chit}}$ & $\Delta_{X_{\phi}}$\tabularnewline
\hline 
\end{tabular}
\par\end{center}

Let $F$ be an ideal hyperbolic quadrilateral in $\mathbb{H}$ with
vertices at $-1,0,1,\infty$. Then $F$ is a fundamental domain for
$\Gamma(2)$ with side pairings given by $a,b$ as depicted in Figure
\ref{fig:,-fundamental-domain}. Let $K$ denote the bounded component
of $X$ obtained by cutting along the three closed horocycles\footnote{By the collar lemma for non-compact surfaces \cite[Lemma 4.4.6]{BuserBook},
these horocycles are disjoint. } of length one. Let $\K\subset F$ denote the subset corresponding
to $K$. Let $K_{\phi}$ denote the preimage of the core $K$ in $X_{\phi}$.
This is bounded by some closed horocycles that are lifts of the horocycles
bounding $K$. This $K_{\phi}$ is tiled by copies of $\K$.

The complement of $K_{\phi}$ in $X_{\phi}$ is a union of disjoint
cusps. For $z\in X_{\phi}\backslash K_{\phi}$, let $y(z)$ denote
the coordinate of 
\[
z=x+iy(z)
\]
in a model of the universal cover of the cusp in the hyperbolic upper
half plane where the bounding horocycle of the cusp is covered by
the line $\Im(z)\equiv1$. For example, if $y(z)=1$, then $z\in\partial K_{\phi}$.
We call this the \emph{standard model of the cusp.}

\section{Probabilistic input\label{sec:Probabilistic-input}}

We say that a family of events hold asymptotically almost surely (a.a.s.)
if they hold with probability tending to one as $n\to\infty$. We
will prove that for uniformly random 
\[
\phi\in\Hom(\Gamma,S_{n})
\]
the following simultaneously happen a.a.s.
\begin{description}
\item [{Connected}] $X_{\phi}$ is connected. The fact that $X_{\phi}$
is connected a.a.s. follows from a result of Dixon \cite{Dixon} that
states that two uniformly random and independent elements of $S_{n}$
generate $A_{n}$ a.a.s. This implies that the image of $\phi$ is
a.a.s. transitive on $[n]$, and this is what is required to ensure
$X_{\phi}$ connected.
\item [{Spectral~gap}] For any non-trivial $\theta\in\Hom(\Gamma,\mu_{2})$,
let $\chi(\theta)$ denote the restriction of $\theta$ to $\Gamma_{\phi}$.
We have a.a.s., for any non-trivial such $\theta$
\begin{equation}
\lambda_{1}(X_{\phi,\chi(\theta)})\geq\frac{1}{4}-o_{n\to\infty}(1).\label{eq:sg_eq}
\end{equation}
\uline{Remark.} This relies crucially on the special fact that
any connected degree two covering space $X_{\theta}$ of $X$ has
$\lambda_{1}(X_{\theta})\geq\frac{1}{4}$. This, as well as (\ref{eq:sg_eq})
will be established in $\S$\ref{sec:Probabilistic-spectral-gap}.
\item [{Geometric~tangle~freeness}] \textbf{GTF($c\log n)$ }holds for
some $c>0$ not depending on $n$:\\
\textbf{GTF($R$): }For any $z\in X_{\phi}$, the fundamental group
$\pi_{1}(B_{X_{\phi}}(z,R))$ is cyclic (possibly trivial). \\
This will be established in $\S$\ref{sec:-geometric-tangle}.
\end{description}
Since there only finitely many conditions above, it obviously suffices
to prove each occur a.a.s. 

\section{Probabilistic spectral gap\label{sec:Probabilistic-spectral-gap}}

We assume $X_{\phi}$ is connected here since we already explained
this holds a.a.s.

First we establish the needed spectral properties of degree two covers
of $X$. There is a one-to-one correspondence between non-trivial
$\theta\in\Hom(\Gamma,\mu_{2})$ and connected degree two covers $X_{\theta}$
of $X$ via taking kernels. Any non-trivial such $\theta$ factors
as
\[
\Gamma\to\Gamma/[\Gamma,\Gamma]\to H_{1}(\Gamma,\mu_{2})\cong\mu_{2}\times\mu_{2}\to\mu_{2}
\]
 where the first two maps are canonical. Consider $\Gamma(4)$, the
subgroup of $\Gamma$ consisting of elements of $\Gamma$ congruent
to the class of $\pm I$ modulo 4. It is not hard to check that the
quotient 
\[
\Gamma\to\Gamma/\Gamma(4)
\]
 agrees with the map $\Gamma\to H_{1}(\Gamma,\mu_{2})$ above and
hence any connected degree two cover $X_{\theta}$ of $X$ is an intermediate
cover
\[
X(4)=\Gamma(4)\backslash\HH\to X_{\theta}\to\HH.
\]
By a result of Huxley \cite[\S 6]{Huxley}, $\lambda_{1}(X(4))\geq\frac{1}{4}$.
This implies $\lambda_{1}(X_{\theta})\geq\frac{1}{4}$.

Now take an arbitrary non-trivial $\theta\in\Hom(\Gamma,\mu_{2})$
and let $\chi(\theta)$ be the restriction of $\theta$ to $\Gamma_{\phi}$.
Hide and the author \cite[Thm. 1]{HideMagee} proved that since $\lambda_{1}(X)\geq\frac{1}{4}$
(by Huxley \cite{Huxley} again), a.a.s.
\[
\lambda_{1}(X_{\phi})\geq\frac{1}{4}-o_{n\to\infty}(1).
\]
We explain how to generalize this result to obtain the same for $X_{\phi,\chi(\theta)}$.
As we explained in $\S$\ref{sec:Set-up}, odd functions on $X_{\phi,\chi(\theta)}$
correspond to sections of $\L_{\chi(\theta)}$ in a Laplacian-respecting
way. Sections of $\L_{\chi(\theta)}$ correspond to sections of the
Hermitian vector bundle 
\[
V_{\theta\otimes\rho_{\phi}}\to X
\]
associated to the unitary representation $\theta\otimes\rho_{\phi}$,
where $\rho_{\phi}$ is the composition
\[
\rho_{\phi}:\Gamma\xrightarrow{\phi}\text{\ensuremath{\mathrm{Perms}}}([n])\to\U(\ell^{2}([n])).
\]
Recall that if $S$ is a finite set, $\ell_{0}^{2}(S)$ denotes the
orthocomplement in $\ell^{2}(S)$ of the constant functions with respect
to the standard inner product. We have a splitting $\rho_{\phi}\cong\mathbf{1}\oplus\rho_{\phi}^{0}$
where
\[
\rho_{\phi}^{0}:\Gamma\xrightarrow{\phi}\text{\ensuremath{\mathrm{Perms}}}([n])\to\U(\ell_{0}^{2}([n])).
\]
This means 
\[
\theta\otimes\rho_{\phi}\cong\theta\oplus\left(\theta\otimes\rho_{\phi}^{0}\right).
\]
Sections of the associated bundle to the $\theta$ direct summand
above correspond exactly to odd functions on $X_{\phi,\chi(\theta)}$
that are lifts from odd functions on $X_{\theta}$. This only yields
eigenvalues $\geq\frac{1}{4}$ by previous remarks on the spectrum
of $X_{\theta}$. Thus the only sections of $\L_{\chi(\theta)}$ that
can correspond to eigenvalues of the Laplacian below $\frac{1}{4}$
correspond to sections of the sub-Hermitian vector bundle over $X$
associated to $\theta\otimes\rho_{\phi}^{0}$. We now use two facts
from operator algebras.

\uline{A.} If $\rho_{\phi}^{0}$ converge strongly in probability
to the regular representation $\lambda_{\Gamma}$ of $\Gamma$ (as
they do in the current case by the main result of Bordenave--Collins
\cite{BordenaveCollins}) then for any finite dimensional representation
$\theta,$ $\theta\otimes\rho_{\phi}^{0}$ converge strongly in probability
to 
\[
\theta\otimes\lambda_{\Gamma}.
\]
This is known as `matrix amplification' e.g. \cite[\S 9]{HaagerupThr}.

\uline{B.} On the other hand, by Fell's absorption principle, $\theta\otimes\lambda_{\Gamma}\cong\lambda_{\Gamma}$.
So the upshot is that in this case, $\theta\otimes\rho_{\phi}^{0}$
converges strongly to $\lambda_{\Gamma}$. 

Now the method of \cite{HideMagee}, mutatis mutandis, implies that
if $\lambda_{1}(\L_{\chi(\theta)})$ denotes the bottom of the spectrum
of the Laplacian on sections, then a.a.s.
\[
\lambda_{1}(\L_{\chi(\theta)})\geq\frac{1}{4}-o_{n\to\infty}(1).
\]
By the previous discussion, this gives a.a.s.
\[
\lambda_{1}(X_{\phi,\chi(\theta)})\geq\frac{1}{4}-o_{n\to\infty}(1),
\]
i.e. \textbf{Spectral gap} holds a.a.s.

\section{Geometric tangle freeness\label{sec:-geometric-tangle}}

Let $\epsilon>0$ be a constant to be chosen later on. Recall for
$z\in X_{\phi}\backslash K_{\phi}$ the notation $y(z)$. If $y(z)\geq n^{\epsilon}$
then the distance from $y$ to $\partial K_{\phi}$ is at least
\[
d(y,\partial K_{\phi})\geq\epsilon\log n.
\]
This means that $B_{X_{\phi}}(y,\epsilon\log n)$ has at most cyclic
$\pi_{1}$ and hence \textbf{GTF($\epsilon\log n$) }holds. So we
may assume $y(z)<n^{\epsilon}$ henceforth --- our constant $c$
will be at most whatever $\epsilon$ we choose.

Let $K_{\phi}(\epsilon\log n)$ (resp. $K(\epsilon\log n)$) be the
points of $X_{\phi}$ (resp. $X$) at distance less than $\epsilon\log n$
from $K_{\phi}$ (resp. $K$). So we currently assume $z\in K_{\phi}(\epsilon\log n)$.
Fix some $o\in K$. Since $K$ is compact there is some $R_{0}>0$
such that $K\subset B_{X}(o,R_{0})$, hence
\begin{equation}
K(\epsilon\log n)\subset B_{X}(o,R_{0}+\epsilon\log n).\label{eq:ball-cover}
\end{equation}

The fiber of $o$ under the covering map has representatives 
\begin{align*}
[(o,i)] & \in\Gamma\backslash_{\phi}\left(\mathbb{H}\times[n]\right),\quad i\in[n].
\end{align*}
Suppose that $z\in K_{\phi}(\epsilon\log n)$, which is tiled by copies
of $K(\epsilon\log n)$. Let $i$ be such that $[(o,i)]$ is in the
same $K(\epsilon\log n)$-tile as $z$. Then from (\ref{eq:ball-cover}),
\[
z\in B_{X_{\phi}}([(o,i)],R_{0}+\epsilon\log n).
\]

Let $R>0$ be a parameter. If $B_{X_{\phi}}(x,R)$ has larger than
cyclic $\pi_{1}$, there are two simple non-trivial geodesic loops
beginning and ending at $x$ --- not necessarily smoothly joining
at $x$ --- of length $\leq4R$. This means there are two simple
non-trivial geodesic loops beginning and ending at $[(o,i)]$ of length
$\le4R+2R_{0}+2\epsilon\log n$.

Any such simple geodesic loop covers a simple geodesic loop in $X$
beginning and ending at $o$. This loop represents an element $\gamma\in\pi_{1}(X,o)$.
It follows that $i$ is a fixed point of $\phi(\gamma)$ and 
\[
d(o,\gamma o)\leq4R+2R_{0}+2\epsilon\log n.
\]
 In the situation of the previous paragraph, we obtained:
\begin{itemize}
\item a number $i\in[n]$
\item two non-identity elements $\gamma_{1},\gamma_{2}\in\Gamma$ that generate
a free rank two group $H\leq\Gamma=\pi_{1}(X,o)$, such that $i$
is a common fixed point of $\phi(H)$ and $d(o,\gamma_{i}o)\leq4R+2R_{0}+\epsilon\log n$.
\end{itemize}
We will now show that the above a.a.s. never happens: note that we
have majorized the event of geometric tangles by an event depending
only on elements of $\Gamma$ and $\phi$.

\uline{Lattice point count}

Consider that $K$ has finite diameter $D$ and volume $V$. If $\gamma o\in B_{\mathbb{H}}(o,r)$
for $\gamma\in\Gamma$ then $\gamma\K\subseteq B_{\mathbb{H}}(o,r+D)$,
and since $B_{\mathbb{H}}(o,r+D)$ has volume $\asymp e^{r+D}\asymp e^{r}$,
there are at most $\asymp V^{-1}e^{r}\asymp e^{r}$ such $\gamma$.
In summary,
\begin{equation}
\left|\left\{ \,\gamma\in\Gamma\,:\,d(o,\gamma o)\leq r\,\right\} \right|\ll e^{r}.\label{eq:lattice_point_count}
\end{equation}

\uline{Word lengths}

Let $\gamma\in\Gamma$ such that $d(o,\gamma o)\leq r$. We wish to
bound the word length of $\gamma$ with respect to the generators
$a,b$, or what is the same, the number of times the geodesic arc
in $\mathbb{H}$ from $o$ to $\gamma o$ cuts a side of the tiling
of $\mathbb{H}$ given by $\Gamma$-translates of $F$.

Fix $\gamma$ as above and let $\alpha$ denote this aforementioned
geodesic arc. We subdivide $\alpha$ according to times when it leaves
and enters
\[
\mathfrak{K}\eqdf\bigcup_{\gamma'\in\Gamma}\gamma'\overline{\K}.
\]
By perturbing $o$ an infinitesimal amount we can assume $\alpha$
does not intersect any `corner' in $\partial\mathfrak{K}\cap\bigcup_{\gamma'\in\Gamma}\gamma'\partial F$
--- this is not really an important point, only a technical one.

Any thus obtained subsegment $\alpha'$ of $\alpha$ that is \emph{outside}
$\mathfrak{K}$ intersects $\bigcup_{\gamma'\in\Gamma}\gamma'\partial F$
$\leq\exp(2\ell_{\mathbb{H}}(\alpha'))$ times by direct calculation
in the hyperbolic metric. 

Any subsegment $\alpha'$ that is \emph{inside }$\mathfrak{K}$ intersects
$\bigcup_{\gamma'\in\Gamma}\gamma'\partial F$ at most $1+C\ell_{\mathbb{H}}(\alpha')\leq\exp(C\ell_{\mathbb{H}}(\alpha'))$
times for some $C\geq2$. In total then the number of intersections
is 
\[
\leq\sum_{\alpha'}\exp(C\ell_{\mathbb{H}}(\alpha'))\leq\prod_{\alpha'}\exp(C\ell_{\mathbb{H}}(\alpha'))=\exp(C\ell_{\mathbb{H}}(\alpha)).
\]
In summary, there is $C\geq2$ such that $d(o,\gamma o)\leq r$ implies
that the word length of $\gamma$ is at most $e^{Cr}$.

\uline{Probabilities of common fixed points}

Now fix $\epsilon>0$ such that 
\begin{equation}
6C\epsilon,2\epsilon<\frac{1}{4}.\label{eq:epsilon1}
\end{equation}

Let $\H(R,n)$ denote the collection of rank two subgroups of $\Gamma$
generated by two elements $\gamma_{1}$ and $\gamma_{2}$ with $\ensuremath{d(o,\gamma_{i}o)\leq4R+2R_{0}+\epsilon\log n}$.
Previously we proved \textbf{GTF($R$)}\textbf{\emph{ }}was majorized
by the event that some element of $\H(R,n)$ has a common fixed point
in $[n]$ under $\phi$. Let $\mathrm{fix}_{H}(\phi)$ denote the
number of common fixed points of $\phi(H)$ and suppose 
\[
n\geq e^{6C(4R+2R_{0})}.
\]
Then Proposition \ref{prop:expected_fixed_points_bound} implies that
for any $H\in\H(R,n)$
\[
\E_{n}[\mathrm{fix}_{H}]\leq\frac{e^{6C(4R+2R_{0}+\epsilon\log n)}}{n}\stackrel{\eqref{eq:epsilon1}}{\ll}\frac{n^{\frac{1}{4}}e^{C'R}}{n}
\]
for some $C'>0$. Then
\begin{align*}
 & \E_{n}[\#\text{ common fixed points of subgroups \ensuremath{H} for \ensuremath{H\in\H(R)]}}\\
 & \leq\sum_{H\in\H(R)}\E_{n}[\mathrm{fix}_{H}]\\
 & \ll\sum_{H\in\H(R)}\frac{e^{C'R}}{n^{\frac{3}{4}}}\\
 & \stackrel{\eqref{eq:lattice_point_count}}{\ll}e^{2(4R+2R_{0}+\epsilon\log n)}\frac{e^{C'R}}{n^{\frac{3}{4}}}\\
 & \stackrel{\eqref{eq:epsilon1}}{\ll}\frac{e^{C''R}}{n^{\frac{1}{2}}}
\end{align*}
for some $C''>0$. Now take
\[
R=\frac{1}{4C''}\log n
\]
 to obtain
\[
\E_{n}[\#\text{ common fixed points of subgroups \ensuremath{H} for \ensuremath{H\in\H(R,n)]}}\ll n^{-\frac{1}{4}}
\]
so by Markoff's inequality
\[
\mathbb{P}_{n}[\text{there is any \ensuremath{H\in\H(R,n)} with any common fixed point]\ensuremath{\ll n^{-\frac{1}{4}}}. }
\]
Hence a.a.s. \textbf{GTF($c\log n$) }holds if $c=\min\left(\frac{1}{4C''},\epsilon\right)>0$.

\section{Local bounds for eigensections}

Recall from $\S$\ref{sec:Probabilistic-input} \textbf{GTF($R$):
}For any $z\in X_{\phi}$, the geometric ball $B_{X_{\phi}}(z,R)$
has cyclic fundamental group.

In this section, let $\phi\in\Hom(\Gamma,S_{n})$, assume $X_{\phi}$
is connected, and let $\chi\in\Hom(\Gamma_{\phi},\mu_{2})$. The purpose
of this section is given an eigensection $f$ of $\L_{\chi}$, with
eigenvalue $\lambda$, to give a local bound on $|f|$ that only depends
on $\lambda$ and $\text{\ensuremath{\phi}},$ under the assumption
\textbf{GTF($c\log n$) }for $c$ as in $\S$\ref{sec:Probabilistic-input}. 
\begin{prop}
\label{prop:Linfty-bound}For any $0<c'<c$ the following holds. Let
$\phi,\chi$ be as above. Assume that $\phi$ satisfies \textbf{Connected
}and \textbf{GTF($c\log n$)}. Suppose that $f$ is any smooth section
of $\L_{\chi}$ with eigenvalue $\lambda\in[0,\frac{1}{4})$ and $\|f\|_{L^{2}(X_{\phi})}=1$.
Then:
\begin{enumerate}
\item If $z\in X_{\phi}\backslash K_{\phi}$, and $L$ is the length of
the closed horocycle bounding the cusp containing $z$, then
\[
|f(z)|^{2}\ll_{c}n^{-c\sqrt{\frac{1}{4}-\lambda}}\left(1+\frac{y(z)}{L}\log\left(\frac{2y(z)n^{c}}{L}\right)\right).
\]
\item If $z\in K_{\phi}$ then
\end{enumerate}
\[
|f(z)|^{2}\ll_{c',c}n^{-c'\sqrt{\frac{1}{4}-\lambda}}.
\]
 The implied constants depend on $\Gamma$ and $c',c$ but nothing
else. 
\end{prop}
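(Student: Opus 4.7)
My plan is to pass to the universal cover of $X_\phi$ and apply an integrated spherical mean-value identity for $\Delta$-eigenfunctions, using the \textbf{GTF} hypothesis to control how balls in $\HH$ project to $X_\phi$. Write $\lambda = \tfrac14 - s^2$ with $s \in (0,\tfrac12]$, let $\phi_s$ denote the Harish-Chandra spherical function on $\HH$ with eigenvalue $\lambda$ (so $\phi_s(r)\sim c_s e^{-(1/2-s)r}$), and let $\tilde f$ denote the lift of $f$ to a $\Gamma_\phi$-equivariant (via multiplier $\chi$) eigenfunction on $\HH$. Because $|\chi|=1$, the density $|\tilde f|^2$ is $\Gamma_\phi$-invariant and descends to $|f|^2$ on $X_\phi$.

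For part (2), fix a lift $\tilde z \in \HH$ of $z$ and start from the spherical mean value identity
\[
\phi_s(r)\,\tilde f(\tilde z) \;=\; \frac{1}{|\partial B_\HH(\tilde z,r)|}\int_{\partial B_\HH(\tilde z,r)} \tilde f\,d\sigma.
\]
Squaring, applying Cauchy--Schwarz, and integrating in $r\in[0,c'\log n]$ yields
\[
|f(z)|^2\int_0^{c'\log n} \phi_s(r)^2 |\partial B_\HH(\tilde z,r)|\,dr \;\le\; \int_{B_\HH(\tilde z, c'\log n)}|\tilde f|^2\,dV.
\]
The left-hand integrand is $\asymp e^{2sr}$, so the prefactor is $\gtrsim s^{-1} n^{2c's}$. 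For the right-hand side I use \textbf{GTF}$(c\log n)$ with $c>c'$: the set of $\gamma\in\Gamma_\phi$ with $\gamma\tilde z\in B_\HH(\tilde z,c'\log n)$ is contained in a cyclic subgroup $\langle\tilde g\rangle$, so the projection $B_\HH(\tilde z,c'\log n)\to X_\phi$ has multiplicity at most the number of $\tilde g$-translates of $\tilde z$ in the ball. A direct horocyclic/axis count bounds this by a polynomial in $\log n$, giving $\int_{B_\HH(\tilde z,c'\log n)}|\tilde f|^2\,dV \ll (\log n)^{O(1)}\|f\|_{L^2(X_\phi)}^2 = (\log n)^{O(1)}$. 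Absorbing the polylog into the exponent by further shrinking $c'$ yields $|f(z)|^2 \ll n^{-c'\sqrt{1/4-\lambda}}$.

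For part (1), $z$ lies in a cusp with parabolic stabilizer $\langle\tilde g\rangle$; in the standard model $\tilde g$ is translation by $L$, and $\tilde f(w+L)=\chi(\tilde g)\tilde f(w)$ with $\chi(\tilde g)\in\{\pm 1\}$ produces the Fourier expansion $\tilde f(x+iy)=\sum_k a_k(y)e^{2\pi ikx/L}$ indexed by $k\in\Z$ or $k\in\Z+\tfrac12$. Separation of variables forces $a_k(y)=c_k y^{1/2}K_s(2\pi|k|y/L)$ for $k\ne 0$ and $a_0(y)=\beta y^{1/2-s}$ when the $k=0$ mode is allowed. The horocycle $\partial K_\phi$ sits at $y=1$, a set of core points to which part (2) applies: integrating the bound from (2) against $dx/L$ over this horocycle gives $\sum_k |a_k(1)|^2 \ll n^{-c\sqrt{1/4-\lambda}}$, which in particular bounds $|\beta|^2$ and each $|c_k K_s(2\pi|k|/L)|^2$. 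Propagating to height $y\ge 1$ via monotonicity of $K_s$, the zero mode contributes $|\beta|^2 y^{1-2s}\ll n^{-cs}\cdot(1+y/L)$, and pointwise summation of the non-zero modes, split dyadically into those with $|k|\lesssim (L/y)n^c$ (where $K_s$ is of polynomial size and yields the $\log(2yn^c/L)$ factor) and those with $|k|\gtrsim (L/y)n^c$ (where the $K_s$-decay is superpolynomial and contributes $O(n^{-cs})$), assembles into the claimed bound.

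The main obstacle I anticipate is the right-hand side estimate in part (2): bounding $\int_{B_\HH(\tilde z,c'\log n)}|\tilde f|^2\,dV$ by $(\log n)^{O(1)}\|f\|_{L^2(X_\phi)}^2$. The projection multiplicity can be large precisely when $\tilde g$ is parabolic with many short horocyclic orbit points, so verifying that \textbf{GTF} forces this multiplicity to be at most polylogarithmic requires explicit hyperbolic-geometric estimates on horoballs (translated into the height coordinate $y$ and the horocycle length $L$ of the cusp being probed). Once that bound is in place, the Fourier-series matching for part (1) is largely a routine consequence of the Bessel-function ansatz and the boundary data extracted from (2).
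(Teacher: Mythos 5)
Your approach — spherical mean value identity plus Cauchy–Schwarz, then a covering-multiplicity count — is genuinely different from the paper's, which runs through Gamburd's pre-trace inequality (Lemma \ref{lem:pretrace}) with the kernel $K_t = k_t \ast k_t$ and its Selberg transform $H_t = |h_t|^2 \geq 0$. Unfortunately the key step you flag as "the main obstacle" is not just an obstacle: it is a genuine gap, and the quantity you hope is polylogarithmic is in fact polynomial in $n$.

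Concretely, suppose $z \in K_\phi$ and the cyclic subgroup $\Gamma' = \langle \tilde g \rangle$ produced by \textbf{GTF}($c\log n$) is parabolic. In the standard cusp model $\tilde g$ is translation by $L$ (the length of the bounding horocycle of the cusp, which can be as small as $1$), and $y(\tilde z) \leq 1$. Then $d_{\HH}(\tilde z, \tilde g^k \tilde z) = 2\sinh^{-1}\bigl(|k|L/(2y(\tilde z))\bigr)$, so the set $\{k : d(\tilde z,\tilde g^k\tilde z) \leq 2R\}$ has size $\asymp (y(\tilde z)/L)\,e^{2R}$. With $R = c'\log n$, $y \leq 1$, $L = O(1)$ this is $\gg n^{2c'}$, so the projection multiplicity of $B_\HH(\tilde z, c'\log n) \to X_\phi$ is polynomial in $n$, not polylogarithmic. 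Hence $\int_{B_\HH(\tilde z, c'\log n)}|\tilde f|^2\,dV$ can be as large as a positive power of $n$ (the ball pokes up into the cusp and wraps around it $\sim n^{c'}$ times), and the Cauchy–Schwarz argument gives no cancellation of this factor against the $n^{-2c's}$ prefactor. Your argument works cleanly only in the hyperbolic case, where the displacement-length homomorphism gives the $O(\log n)$ count.

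The paper's pre-trace approach sidesteps this precisely by retaining more of the eigenfunction structure. Applying the pre-trace inequality with the sharper bound $K_t(z,w) \ll \mathbf{1}\{d_\HH(z,w) \leq 2t\}\,e^{-\frac{1}{2}d_\HH(z,w)}$ (Gamburd \cite[Prop.~5.1]{Gamburd1}) produces the weighted sum $\sum_k e^{-\frac12 d(\tilde z,\tilde g^k\tilde z)}\mathbf{1}\{d \leq 2t\}$. The extra factor $e^{-d/2} \asymp (1+|k|L/y)^{-1}$ turns the sum over the parabolic cyclic group into a harmonic-type sum that is only $\ll 1 + (y/L)\log(\,\cdot\,)$, i.e.\ $O(\log n)$ for $y \leq 1$; this is exactly the logarithm that appears in part (1). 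Your Cauchy--Schwarz step discards that $e^{-d/2}$ decay (it is an $L^1$--$L^\infty$ type bound on $\tilde f$ rather than an eigenfunction-weighted one), and no "explicit hyperbolic-geometric estimate on horoballs" will recover it, because the geometric multiplicity really is $n^{\Theta(c')}$ when $L = O(1)$. To repair the approach you would need to re-inject the spectral information, e.g.\ via the cusp Fourier expansion of $\tilde f$ inside the ball before integrating, at which point you are essentially reconstructing the pre-trace mechanism. The Fourier-mode analysis you sketch for part (1) is sensible, but it cannot rescue part (2), on which it depends.
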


We will adapt and extend the argument of Gilmore--Le Masson--Sahlsten--Thomas
\cite{GLST} to the present context of cusped surfaces and also explain
why it is uniform over line bundles. We will apply the pre-trace inequality
from \cite[Prop. 5.2]{Gamburd1} that we recall here for the sake
of the reader.
\begin{lem}
\label{lem:pretrace}For $\lambda\in[0,\frac{1}{4}]$ let $r(\lambda)\eqdf i\sqrt{\frac{1}{4}-\lambda}$.
Let $0=\lambda_{0}\leq\lambda_{1}\leq\lambda_{2}\leq\cdots\leq\lambda_{q}\leq\frac{1}{4}$
denote the eigenvalues of the Laplacian on sections of $\L_{\chi}$
that are at most $\frac{1}{4}$, let $f_{j}$ denote their corresponding
$L^{2}$-normalized eigensections, viewed as $\chi$-equivariant functions
for $\Gamma_{\phi}$ acting on $\mathbb{H}$. For any compactly supported
$k:[0,\infty)\to\R$, with \uline{non-negative} Selberg transform
$h$, for any $z\in\mathbb{H}$
\[
\sum_{j\in[q]}h(r(\lambda_{j}))|f_{j}(z)|^{2}\leq\sum_{\gamma\in\Gamma_{\phi}}\chi(\gamma)k(d_{\mathbb{H}}(z,\gamma z)).
\]
\end{lem}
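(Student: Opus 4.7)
The plan is to build the standard automorphic kernel attached to $k$ and $\chi$, recognize it as an integral operator diagonalized by the spectral decomposition of the Laplacian on sections of $\L_\chi$, and then discard the non-negative terms we do not want. Concretely, I would set
\[
K_\chi(z,w) \eqdf \sum_{\gamma \in \Gamma_\phi} \chi(\gamma)\, k(d_{\HH}(z,\gamma w)).
\]
The compact support of $k$ makes this a finite sum for every pair $(z,w)$. Using $\Gamma_\phi$-invariance of $d_{\HH}$ and the fact that $\chi$ is a character, a reindexing check gives the equivariance $K_\chi(\gamma_1 z, \gamma_2 w) = \chi(\gamma_1)\chi(\gamma_2) K_\chi(z,w)$, so $K_\chi$ descends to an integral kernel on $L^2$-sections of $\L_\chi$. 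A routine unfolding in the $w$-variable identifies the associated operator, acting on $\chi$-equivariant functions on $\HH$, with convolution against the radial function $k$. Since $k$ is point-pair invariant this operator commutes with every isometry of $\HH$, and in particular with $\Delta$; it therefore preserves each spectral subspace and acts on any $\lambda$-eigensection by the scalar $h(r(\lambda))$, where $h$ is the Selberg/Harish-Chandra transform of $k$. This scalar action is essentially the defining property of the Selberg transform.

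Next I would invoke the spectral decomposition of $L^2$-sections of $\L_\chi$ over the cusped finite-area surface $X_\phi$: the discrete part (spanned by the $f_j$, together with any eigensections with eigenvalue $> \tfrac14$) and an Eisenstein continuous part attached to $\L_\chi$. Expanding $K_\chi(z,w)$ in this basis and evaluating at $w = z$ yields the identity
\[
\sum_{\gamma \in \Gamma_\phi} \chi(\gamma)\, k(d_{\HH}(z,\gamma z)) \;=\; \sum_{\text{all }j} h(r(\lambda_j))\, |f_j(z)|^2 \;+\; \frac{1}{4\pi}\int_{-\infty}^{\infty} h(t)\, |E_\chi(z,\tfrac12 + it)|^2\, dt.
\]
Because $h \geq 0$ by hypothesis, every summand on the right and the entire integrand is non-negative. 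Discarding the continuous piece and any discrete eigenvalue greater than $\tfrac14$ then produces exactly the inequality stated.

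The only genuinely delicate point is justifying this spectral expansion of the kernel pointwise on the diagonal in the line-bundle cusped setting. Compact support of $k$ makes $h$ a Paley--Wiener function, so convergence in the expansion is not the obstacle; what one needs is that the Eisenstein series for $\L_\chi$ on $X_\phi$ satisfy the expected meromorphic continuation and span the orthogonal complement of the discrete spectrum. This is classical Selberg-trace-formula machinery adapted to a $\mu_2$-character (and can equivalently be pulled back to the scalar-valued problem on the unramified double cover $X_{\phi,\chi}$), and it is exactly what is packaged in the cited \cite[Prop.~5.2]{Gamburd1}, from which the inequality falls out by dropping non-negative terms as above.
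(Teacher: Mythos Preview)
Your proposal is correct and is precisely the standard pre-trace argument the paper has in mind: the paper does not give its own proof but simply cites \cite[Prop.~5.2]{Gamburd1} and remarks that inserting the character $\chi$ ``does not change the proof.'' Your sketch (automorphic kernel, spectral expansion, drop the non-negative continuous and high-eigenvalue contributions, with the option of pulling back to the double cover $X_{\phi,\chi}$) is exactly that argument.
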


The original version of this inequality does not include the character
$\chi$ but it does not change the proof.
\begin{proof}[Proof of Proposition \ref{prop:Linfty-bound}]
Make the assumptions given in the proposition. Let $k_{t}$ be defined
by
\[
k_{t}(\rho)\eqdf\frac{\mathbf{1}\{\rho\leq t\}}{\sqrt{\cosh t}}.
\]
Then $k_{t}$ is thought of as the kernel $k_{t}(x,y)\eqdf k_{t}(d_{\mathbb{H}}(x,y))$
on $\mathbb{H}\times\mathbb{H}$. As such, the convolution $K_{t}\eqdf k_{t}\ast k_{t}$
makes sense and corresponds to a compactly supported $K_{t}:[0,\infty)\to\R$.
The Selberg transform is a $*$-morphism, so if $H_{t}$ denotes the
Selberg transform of $K_{t}$, we have 
\[
H_{t}=|h_{t}|^{2}\geq0
\]
where $h_{t}$ is the Selberg transform of $k_{t}$. It is proved
in \cite[Proof of Thm. 4.1]{GLST} that for $t\geq3$
\begin{equation}
h_{t}(r(\lambda_{i}))\gg\sinh\left(t\sqrt{\frac{1}{4}-\lambda}\right).\label{eq:lower-bound-h}
\end{equation}
We can also estimate
\begin{align*}
K_{t}(x,y) & =k_{t}\ast k_{t}(x,y)=\frac{1}{\cosh t}\int_{z\in\HH}\mathbf{1}\{d_{\HH}(x,z)\leq t,d_{\HH}(z,y)\leq t\}\\
 & \leq\frac{\mathbf{1}\{d_{\HH}(x,y)\leq2t\}\mathrm{Vol_{\HH}}(B_{\HH}(x,t))}{\cosh t}\ll\mathbf{1}\{d_{\HH}(x,y)\leq2t\}.
\end{align*}
We now apply Lemma \ref{lem:pretrace} with this pair of functions
$K_{t},H_{t}$ and 
\[
t=\frac{1}{2}c\log n
\]
with $c$ as in \textbf{GTF($c\log n$). }Supposing $f$ is an $L^{2}$-normalized
eigensection of $\L_{\chi}$ with eigenvalue $\lambda$ we obtain
for any $z\in\mathbb{H}$ that 
\begin{equation}
|f(z)|^{2}\leq n^{-c\sqrt{\frac{1}{4}-\lambda}}\sum_{\gamma\in\Gamma_{\phi}}\mathbf{1}\{d(z,\gamma z)\leq2t\}.\label{eq:rhs-pointer}
\end{equation}
Now suppose $z\in X_{\phi}$ is fixed --- the outcome of the following
logic will not depend on $z$.

By \textbf{GTF($c\log n$) }the collection of $\gamma$ for which
the right hand side of (\ref{eq:rhs-pointer}) is not zero lie inside
a (possibly trivial) cyclic subgroup of $\Gamma_{\phi}$. Let $\Gamma'$
denote this cyclic subgroup. Suppose it is not trivial (the trivial
case is even easier).

First suppose the non-trivial elements of this $\Gamma'$ are hyperbolic.
If $d(z,\gamma z)\leq2t$ then the displacement length of $\gamma$
is $\le2t$. But displacement length is a homomorphism
\[
\Gamma'\to(\R,+)
\]
and the generator of $\Gamma'$ has displacement length bounded below
by $\ell_{0}$, the shortest length of a closed geodesic on $X$.
This means the right hand side of (\ref{eq:rhs-pointer}) is at most
\[
n^{-c\sqrt{\frac{1}{4}-\lambda}}\frac{4t+1}{\ell_{0}}\ll n^{-c'\sqrt{\frac{1}{4}-\lambda}}
\]
 for any $0<c'<c$.

Now suppose the non-trivial elements of $\Gamma'$ are parabolic,
and let $\gamma_{0}$ be a generator of $\Gamma'$. We split into
two cases.

\uline{Case 1.} $z$ is not in $K_{\phi}$. 

This means $z$ is in a cusp of $X_{\phi}$ bounded by a horocycle
of length $L$ lying over a component of $\partial K$. After changing
coordinates to the standard model of the cusp (cf. $\S$\ref{sec:Set-up}),
$\gamma_{0}(z)=z+L$ where $L$ is the length of the closed horocycle
in $X_{\phi}$ covered by $\Im(z)\eqdf1$. In this model, $z\in X_{\phi}$
corresponds to $z=x+iy(z)$ with $y(z)\geq1$. In this case, the previous
argument does not work because
\begin{equation}
d_{\mathbb{H}}(z,\gamma_{0}^{n}z)=2\sinh^{-1}\left(\frac{|n|L}{2y(z)}\right)\label{eq:distance-frmula}
\end{equation}
and so $4t+1$ above gets replaced by something exponential in $t$.
But we note for later that $d_{\mathbb{H}}(z,\gamma_{0}^{n}z)\leq2t$
implies
\begin{equation}
n\leq\frac{2y(z)e^{2t}}{L}.\label{eq:n-bound}
\end{equation}

Instead, we depart from \cite{GLST} and re-estimate $K_{t}$. This
is already done in Gamburd \cite[Prop. 5.1]{Gamburd1} and gives
\[
K_{t}(z,w)\ll\frac{e^{t}}{\cosh t}\mathbf{1}\{d_{\mathbb{\HH}}(z,w)\leq2t\}e^{-\frac{1}{2}d_{\HH}(z,w)}\ll\mathbf{1}\{d_{\mathbb{\HH}}(z,w)\leq2t\}e^{-\frac{1}{2}d_{\HH}(z,w)}.
\]
Now $\gamma_{0}^{n}$ with $n\neq0$ contributes to the right hand
side of the pre-trace inequality at most (using (\ref{eq:n-bound})
and (\ref{eq:distance-frmula}))

\[
\mathbf{\ll1}\left\{ |n|\leq\frac{2y(z)e^{2t}}{L}\right\} \frac{y(z)}{|n|L}\ll\frac{y(z)}{L}\mathbf{1}\left\{ |n|\leq\frac{2y(z)e^{2t}}{L}\right\} \frac{1}{|n|}.
\]
Summing over all $n$ (counting one for the identity) gives at most
\[
\ll1+\frac{y(z)}{L}\log\left(\frac{2y(z)e^{2t}}{L}\right)
\]
for the right hand side of pre-trace inequality. Putting things together
with $t=\frac{c}{2}\log n$ gives 
\[
|f(z)|^{2}\leq n^{-c\sqrt{\frac{1}{4}-\lambda}}\left(1+\frac{y(z)}{L}\log\left(\frac{2y(z)n^{c}}{L}\right)\right).
\]

\uline{Case 2.} $z\in K_{\phi}.$ 

Take all lifts of $\partial K$ to $\mathbb{H}$, where they form
a countably infinite collection of disjoint horocircles. Each such
horocircle is tangent to $\partial\mathbb{H}$ at a fixed point of
a parabolic element of $\Gamma$, and this gives a one-to-one correspondence
between infinite cyclic parabolic subgroups and horocircles. Our assumption
on $z$ implies $z$ lifts to a point disjoint from all closed discs
bounded by these horocircles. 

In this case we can still change coordinates in the universal cover
so $\gamma_{0}(z)=z+L$, $L\in\pm\N$, and $z=x+iy$ with $y\leq1$.
Then running the same argument as in the previous case, we get 
\begin{align*}
|f(z)|^{2} & \leq n^{-c\sqrt{\frac{1}{4}-\lambda}}\left(1+\frac{y(z)}{L}\log\left(\frac{2y(z)n^{c}}{L}\right)\right)\ll n^{-c\sqrt{\frac{1}{4}-\lambda}}\log\left(n^{c}\right)\\
 & \ll n^{-c'\sqrt{\frac{1}{4}-\lambda}}.
\end{align*}
\end{proof}

\section{Bumping off the cusps\label{sec:Bumping-off-the-cusps}}

We need the following localization formula due to Ismagilov--Morgan--Simon--Sigal
\cite[Thm. 3.2]{CFKS}. 
\begin{thm}[IMS localization formula]
\label{thm:IMS}Suppose $(M,g)$ is a Riemannian manifold and $\{J_{i}\}_{i\in\I}$
are a family of smooth functions $J_{i}:M\to[0,1]$ such that 
\begin{enumerate}
\item $\sum_{i\in\I}J_{i}^{2}\equiv1$
\item on any compact set $K\subset M$, only finitely many $J_{i}$ are
not zero
\item $\sup_{x\in M}\sum_{i\in\I}|\nabla J_{i}(x)|^{2}<\infty$.
\end{enumerate}
Then 
\[
\Delta=\sum_{i\in\I}J_{i}\Delta J_{i}-\sum_{i\in\I}|\nabla J_{i}|^{2}.
\]
Above, $\Delta$ and $\nabla$ are the (non-negative) Laplace-Beltrami
and gradient operators defined w.r.t. $g$.
\end{thm}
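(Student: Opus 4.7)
\bigskip

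\textbf{Proof proposal.} The plan is to verify the formula as an operator identity by applying both sides to an arbitrary smooth function $\psi$ and invoking the Leibniz rule for $\Delta$. Recall that for smooth $f,g$ (and our sign convention making $\Delta$ non-negative),
\[
\Delta(fg) = f\,\Delta g + g\,\Delta f - 2\langle \nabla f,\nabla g\rangle.
\]
Specializing to $f=g=J_i$ gives $\Delta(J_i^2) = 2J_i\,\Delta J_i - 2|\nabla J_i|^2$, and summing over $i$ while using $\sum_i J_i^2 \equiv 1$ (so $\Delta(\sum_i J_i^2)=0$) yields the scalar identity
\[
\sum_{i\in\I} J_i\,\Delta J_i \;=\; \sum_{i\in\I} |\nabla J_i|^2,
\]
which is finite by hypothesis (3). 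The hypotheses (2) and (3) are exactly what I would invoke to justify interchanging the sum with $\Delta$: on any compact set only finitely many terms contribute, and the $L^\infty$ bound gives uniform control.

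Next I would compute $(J_i \Delta J_i)\psi = J_i\,\Delta(J_i\psi)$ using Leibniz:
\[
J_i\,\Delta(J_i\psi) \;=\; J_i^2\,\Delta\psi \;+\; J_i\psi\,\Delta J_i \;-\; 2J_i\langle \nabla J_i,\nabla\psi\rangle.
\]
Summing over $i$, the first term collapses to $\Delta\psi$ via $\sum_i J_i^2 \equiv 1$, the second becomes $\psi\sum_i J_i\,\Delta J_i = \psi\sum_i|\nabla J_i|^2$ by the identity above, and the third vanishes because
\[
\sum_{i\in\I} J_i\,\nabla J_i \;=\; \tfrac{1}{2}\,\nabla\Bigl(\sum_{i\in\I} J_i^2\Bigr) \;=\; 0.
\]
Combining these gives
\[
\sum_{i\in\I} J_i\,\Delta J_i\,\psi \;=\; \Delta\psi \;+\; \Bigl(\sum_{i\in\I}|\nabla J_i|^2\Bigr)\psi,
\]
which is the claimed operator identity after rearrangement.

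The only step requiring real care is the interchange of summation and differentiation, which is where hypotheses (2) and (3) enter. Hypothesis (2) ensures that at each point the sums $\sum_i J_i \nabla J_i$ and $\sum_i J_i \Delta J_i$ are finite sums of smooth terms, so pointwise differentiation is immediate; hypothesis (3) then guarantees that the multiplication operator $\sum_i |\nabla J_i|^2$ is bounded so that both sides make sense as (unbounded) operators on the same natural domain (e.g.\ $C_c^\infty(M)$). I expect this analytic bookkeeping to be the only real obstacle; the algebraic identity itself is a one-line consequence of the Leibniz rule applied twice.
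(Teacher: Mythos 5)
The paper does not prove this result --- it cites it directly from Cycon--Froese--Kirsch--Simon \cite[Thm.\ 3.2]{CFKS} --- so there is no in-paper proof to compare against. Your argument is correct and is the standard one: the two applications of the Leibniz rule
\[
\Delta(fg)=f\,\Delta g+g\,\Delta f-2\langle\nabla f,\nabla g\rangle
\]
(which holds with the sign convention $\Delta=-\mathrm{div}\,\mathrm{grad}$ used here) give exactly the scalar identity $\sum_i J_i\Delta J_i=\sum_i|\nabla J_i|^2$ and the cancellation $\sum_i J_i\nabla J_i=\tfrac12\nabla\bigl(\sum_i J_i^2\bigr)=0$, which together yield the operator identity. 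This is algebraically equivalent to the double-commutator computation in the cited reference, where one writes $J_iHJ_i=\tfrac12(J_i^2H+HJ_i^2)-\tfrac12[J_i,[J_i,H]]$ and evaluates $[J_i,[J_i,\Delta]]=-2|\nabla J_i|^2$; your route is a bit more elementary since it avoids the commutator bookkeeping, at no loss. Your remarks on where hypotheses (2) and (3) enter (local finiteness to justify pointwise interchange of sum and $\Delta$, and boundedness of $\sum_i|\nabla J_i|^2$ so the right-hand side is a well-defined operator on $C_c^\infty(M)$) are the right ones.
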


Suppose that we are given $X_{\phi}$ satisfying \textbf{Connected
}and \textbf{GTF($c\log n$). }Given $\chi\in\Hom(\Gamma_{\phi},\mu_{2})$,
let $f$ be an $L^{2}$-normalized section of $\L_{\chi}$ on $X_{\phi}$
with eigenvalue $\lambda$ in $[0,\lambda_{0}]$ with $\lambda_{0}<\frac{1}{4}$.
Let $\CC_{1}$ and $\CC_{2}$ be connected components of $X_{\phi}\backslash K_{\phi}$,
i.e. cusps. Let $L_{1}$ and $L_{2}$ denote the lengths of the closed
horocycles bounding the respective $\CC_{i}$. Fix universal $J:\R\to[0,1]$
smooth and such that 
\begin{align}
J(r) & \equiv1\quad r\geq3,\label{eq:chi1}\\
J(r) & \equiv0\quad r\leq2,\label{eq:chi2}\\
\sqrt{1-J^{2}} & \in C^{\infty}.\label{eq:chi3}
\end{align}

In the standard model of $\CC_{i}$ (cf. $\S$\ref{sec:Set-up}),
let 
\[
J_{i}(z)\eqdf J\left(\frac{y(z)}{L_{i}}\right)
\]
define a smooth cutoff on $\CC_{i}$ --- then $J_{1}$ and $J_{2}$
have disjoint supports. Extend both functions by zero to the whole
of $X_{\phi}$. Let 
\[
J_{0}\eqdf\sqrt{1-\left(J_{1}^{2}+J_{2}^{2}\right)}
\]
and
\[
f'\eqdf J_{0}f.
\]
We aim to prove that $f'$ has close to the same Rayleigh quotient
as $f$. That is, we compare
\[
\frac{\int_{X_{\phi}}|\nabla f|^{2}}{\int_{X_{\phi}}|f|^{2}},\,\,\,\frac{\int_{X_{\phi}}|\nabla f'|^{2}}{\int_{X_{\phi}}|f'|^{2}}.
\]
The estimates involving $|f|^{2}$ and $|f'|^{2}$ integrals will
rely on the following result of Gamburd \cite[Lemma 4.1]{Gamburd1}.
\begin{thm}[Gamburd's Collar Lemma]
\label{thm:gamburd}Suppose $\lambda_{0}\in(0,\frac{1}{4})$. There
exists $c_{\lambda_{0}}>0$ such that for any $\phi,\chi,f$ as above,
using coordinate $y(z)$ in a fixed cusp $\CC_{i}$ of $X_{\phi}$
\[
\int_{y(z)=L_{i}}^{2L_{i}}|f|^{2}\geq c_{\lambda_{0}}\int_{y(z)=2L_{i}}^{\infty}|f|^{2}
\]
\end{thm}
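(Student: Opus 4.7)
The plan is to Fourier-expand $f$ in the cusp direction, solve the resulting radial ODE, and compare $L^2$ masses mode by mode. In the standard model of $\CC_i$ the cusp lifts to $\{x+iy : y \geq 1\}$ with parabolic generator $\gamma_0:z\mapsto z+L_i$, and $\chi(\gamma_0)\in\{\pm1\}$ makes $f$ satisfy $f(x+L_i,y)=\chi(\gamma_0)f(x,y)$. This yields a Fourier expansion
\[
f(x,y)=\sum_{n\in I}a_n(y)\exp(2\pi inx/L_i),
\]
where $I=\Z$ if $\chi(\gamma_0)=1$ and $I=\Z+\tfrac12$ otherwise. Parseval gives $\int_0^{L_i}|f(x,y)|^2\,dx=L_i\sum_{n\in I}|a_n(y)|^2$.

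Setting $r:=\sqrt{\tfrac14-\lambda}\geq r_0:=\sqrt{\tfrac14-\lambda_0}>0$, the eigenequation $\Delta f=\lambda f$ with $\Delta=-y^2(\partial_x^2+\partial_y^2)$ separates into
\[
a_n''(y)-\tfrac{4\pi^2n^2}{L_i^2}a_n(y)+\tfrac{\lambda}{y^2}a_n(y)=0.
\]
For $n\ne 0$, the independent solutions are $\sqrt{y}\,K_r(2\pi|n|y/L_i)$ and $\sqrt{y}\,I_r(2\pi|n|y/L_i)$; since $f\in L^2(X_\phi)$ and the $I$-Bessel grows exponentially, we must have $a_n(y)=c_n\sqrt{y}\,K_r(2\pi|n|y/L_i)$. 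For $n=0$ (which occurs only when $\chi(\gamma_0)=1$), the solutions are $y^{1/2\pm r}$, and square-integrability of $a_0$ against $dy/y^2$ on $(1,\infty)$ rules out $y^{1/2+r}$ and forces $a_0(y)=B\,y^{1/2-r}$.

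Now compare mode by mode. For the zero mode, direct integration gives
\[
\frac{\int_{L_i}^{2L_i}y^{-1-2r}\,dy}{\int_{2L_i}^\infty y^{-1-2r}\,dy}=2^{2r}-1\geq 2^{2r_0}-1>0.
\]
For $|n|\geq\tfrac12$, the change of variables $u=2\pi|n|y/L_i$ converts the analogous ratio into $\int_{2\pi|n|}^{4\pi|n|}K_r(u)^2\,du/u$ over $\int_{4\pi|n|}^\infty K_r(u)^2\,du/u$. Since $K_r$ is positive, decreasing, and $K_r(u)\asymp u^{-1/2}e^{-u}$ uniformly in $r\in[0,\tfrac12)$ for $u\geq\pi$, bounding the tail by a geometric series of unit-length integrals, each exponentially dominated by the corresponding shell integral, yields a lower bound on this ratio by an absolute positive constant (in fact growing rapidly in $|n|$). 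Applying Parseval and taking the worst per-mode constant gives the claim with $c_{\lambda_0}$ equal to a positive absolute multiple of $2^{2r_0}-1$.

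The main obstacle, and the reason $c_{\lambda_0}\to 0$ as $\lambda_0\to\tfrac14$, is the zero Fourier mode: the non-zero modes decay exponentially and by themselves would allow a uniform constant, but $a_0$ only decays polynomially at the marginal rate $y^{1/2-r}$. The essential structural input is the global $L^2$-hypothesis on $f$, which pins $a_0$ to the $y^{1/2-r}$ branch and excludes a contribution from the non-$L^2$ branch $y^{1/2+r}$; without it, the radial ODE alone would not determine $a_0$ and the ratio could fail.
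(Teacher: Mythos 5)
Your proof is correct. The paper itself does not prove the lemma from scratch: it cites Gamburd's original Lemma 4.1 (which is stated for genuine $L^2$-eigenfunctions, not sections of line bundles) and remarks that the twisted version follows by viewing $f$ as an odd eigenfunction on the double cover $X_{\phi,\chi}$ and working in the doubled cusp. You instead give a self-contained Fourier-analytic argument directly in the line-bundle setting: the character $\chi(\gamma_{0})\in\{\pm1\}$ shifts the Fourier indices to $\Z$ or $\Z+\tfrac12$, and you solve the radial ODE mode by mode. This is in substance the same mechanism as Gamburd's proof (and is the content of his lemma), but you handle the twist by half-integer frequencies rather than by doubling the cusp; the two reductions are equivalent, since a $-1$ twist on a cusp of circumference $L_{i}$ is exactly a $+1$ twist on the doubled cusp of circumference $2L_{i}$, which is where the half-integers come from. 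The mode-by-mode estimates are right: the Euler solutions $y^{1/2\pm r}$ for $n=0$ with $L^{2}$ forcing the decaying branch, the ratio $2^{2r}-1\geq 2^{2r_{0}}-1$ giving the worst (marginal) constant, and the uniform-in-$r$ Bessel asymptotics handling $n\neq0$ with room to spare. One small presentational point: the phrase "square-integrability of $a_{0}$ against $dy/y^{2}$ on $(1,\infty)$" is the hypothesis doing all the work, and you are right to flag it, since it is precisely the fact that $\lambda<\tfrac14$ lies in the discrete spectrum that makes $f\in L^{2}$ and kills the $y^{1/2+r}$ branch.
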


(To obtain this statement from Gamburd's result without thinking about
line bundles, one can apply \cite[Lemma 4.1]{Gamburd1} to $f$ viewed
as a function on $X_{\phi,\chi}$, and work in the double cover of
the cusp.)

It follows from Theorem \ref{thm:gamburd} since in $\CC_{i}$, $J_{0}\equiv1$
in $y(z)\leq2L_{i}$ that
\[
\|f'\|^{2}\geq\int_{y(z)=L_{i}}^{2L_{i}}|f|^{2}\geq c_{\lambda_{0}}\int_{y(z)=2L_{i}}^{\infty}|f|^{2}\geq c_{\lambda_{0}}\|J_{i}f\|^{2}.
\]
Adding this for $i=1,2$, using $\|J_{1}f\|^{2}+\|J_{2}f\|^{2}=1-\|f'\|^{2}$
and rearranging gives
\begin{equation}
\|f'\|^{2}\geq\frac{c_{\lambda_{0}}}{2+c_{\lambda_{0}}}>0.\label{eq:mass-estimate}
\end{equation}

We now start the main argument of this $\S$.
\begin{prop}
\label{prop:Rayleigh_1}With notation as above, if \textbf{GTF($c\log n$)
}holds, then for any $c'<c$
\[
\frac{\langle\Delta f',f'\rangle}{\|f'\|^{2}}-\lambda\ll_{c',\lambda_{0}}n^{-2c'\sqrt{\frac{1}{4}-\lambda}}.
\]
\end{prop}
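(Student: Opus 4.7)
The plan is to start from the IMS localization formula (Theorem~\ref{thm:IMS}) applied to the partition $\{J_0, J_1, J_2\}$, which, after pairing with $f$ and using $\Delta f = \lambda f$, yields the identity
\[
\lambda = \sum_{i=0}^{2}\langle \Delta(J_i f), J_i f\rangle - \sum_{i=0}^{2}\||\nabla J_i| f\|^2.
\]
Isolating the $i=0$ term (which equals $\langle \Delta f', f'\rangle$) and using $\|f'\|^2 = 1 - \|J_1 f\|^2 - \|J_2 f\|^2$, which follows from $\sum_i J_i^2 \equiv 1$, I obtain, after dropping the non-negative terms $\langle \Delta(J_i f), J_i f\rangle \geq 0$ for $i = 1, 2$,
\[
\langle \Delta f', f'\rangle - \lambda\|f'\|^2 \;\leq\; \lambda(\|J_1 f\|^2 + \|J_2 f\|^2) + \sum_{i=0}^{2}\||\nabla J_i| f\|^2.
\]
The remainder of the proof consists in bounding the two families of positive error terms.

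The $\||\nabla J_i| f\|^2$ contributions are controlled on small annular supports. In the standard cusp model, $|\nabla J_i|^2 = J'(y/L_i)^2\, y^2/L_i^2 \ll 1$ on its support $\{y \in [2L_i, 3L_i]\}$, which has uniformly bounded hyperbolic volume; condition~\eqref{eq:chi3} gives the analogous bound for $|\nabla J_0|^2$. On this support one has $y/L = O(1)$ and $\log(2yn^c/L) \ll_c \log n$, so Proposition~\ref{prop:Linfty-bound}(1) yields $|f|^2 \ll n^{-c\sqrt{1/4-\lambda}}\log n$, and hence $\||\nabla J_i| f\|^2 \ll n^{-c\sqrt{1/4-\lambda}}\log n$. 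The $\lambda\|J_i f\|^2$ contributions require Gamburd's collar lemma (Theorem~\ref{thm:gamburd}), because the Proposition~\ref{prop:Linfty-bound} bound grows linearly in $y/L$ and does not integrate over the infinite-volume far cusp; the collar lemma gives
\[
\|J_i f\|^2 \leq \int_{y \geq 2L_i}|f|^2 \leq c_{\lambda_0}^{-1}\int_{L_i \leq y \leq 2L_i}|f|^2,
\]
transferring the mass to a bounded region where the same pointwise estimate produces $\|J_i f\|^2 \ll_{\lambda_0} n^{-c\sqrt{1/4-\lambda}}\log n$.

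Combining these error bounds with the lower bound $\|f'\|^2 \geq c_{\lambda_0}/(2+c_{\lambda_0})$ from \eqref{eq:mass-estimate} gives $\langle \Delta f', f'\rangle/\|f'\|^2 - \lambda \ll_{\lambda_0} n^{-c\sqrt{1/4-\lambda}}\log n$. Since $\sqrt{1/4-\lambda} \geq \sqrt{1/4-\lambda_0} > 0$, the $\log n$ factor is absorbed into a small power of $n$, giving the claimed $\ll_{c',\lambda_0} n^{-2c'\sqrt{1/4-\lambda}}$ for any $c'$ with $2c' < c$. The main obstacle is precisely where Gamburd's collar lemma enters: the pointwise bounds coming from the pre-trace inequality grow in the cusp, so controlling the $L^2$ mass of $J_i f$ there requires exploiting the intrinsic spectral decay of $L^2$ eigensections through a cusp, and this is exactly the input that Theorem~\ref{thm:gamburd} provides.
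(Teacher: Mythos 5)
Your proof is correct but reaches the estimate by a genuinely different route than the paper. Where you write $\langle\Delta(J_if),J_if\rangle\geq 0$ for $i=1,2$ and then spend effort bounding the leftover $\lambda\bigl(\|J_1f\|^2+\|J_2f\|^2\bigr)$ term via a second appeal to Gamburd's collar lemma plus the pointwise bound of Proposition~\ref{prop:Linfty-bound}, the paper instead uses the stronger spectral input $\langle\Delta J_if,J_if\rangle\geq\tfrac14\|J_if\|^2$. This comes from lifting $J_if$ to a compactly supported function on the parabolic cylinder over the cusp, whose $L^2$ Laplacian spectrum is bounded below by $\tfrac14$ (this is \cite[Lemma 4.2]{HideMagee}). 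With that input, after rewriting $\lambda=\lambda\|J_0f\|^2+\lambda(\|J_1f\|^2+\|J_2f\|^2)$, the term $\bigl(\lambda-\tfrac14\bigr)\bigl(\|J_1f\|^2+\|J_2f\|^2\bigr)$ is non-positive because $\lambda<\tfrac14$, and the cusp-mass term cancels exactly — leaving only $\langle\mathfrak{J}f,f\rangle$ as an error, which is then handled exactly as you handle your $\||\nabla J_i|f\|^2$ terms. So the paper needs Gamburd's collar lemma only once (for the denominator bound \eqref{eq:mass-estimate}), whereas your argument needs it twice; conversely your argument uses only the non-negativity of the quadratic form, not the $\tfrac14$ lower bound on cylinders. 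Both are valid, and both lose the same harmless $\log n$ factor that is absorbed by passing from $c$ to any smaller $c'$.
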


\begin{proof}
Let 
\[
\mathfrak{J}\eqdf|\nabla J_{0}|^{2}+|\nabla J_{1}|^{2}+|\nabla J_{2}|^{2}\geq0.
\]
This is supported in $X_{\phi}\backslash K_{\phi}$ and in $\CC_{i}$,
$\mathfrak{J}$ is supported on $\{z:2L_{i}\leq y(z)\leq3L_{i}\}$.
By Theorem \ref{thm:IMS} we have 
\begin{align*}
\langle\Delta f',f'\rangle & =\langle\mathfrak{J}f,f\rangle+\langle\Delta f,f\rangle-\langle\Delta J_{1}f,J_{1}f\rangle-\langle\Delta J_{2}f,J_{2}f\rangle\\
 & =\langle\mathfrak{J}f,f\rangle+\lambda-\langle\Delta J_{1}f,J_{1}f\rangle-\langle\Delta J_{2}f,J_{2}f\rangle.
\end{align*}
By lifting, each $J_{i}f$, $i=1,2$, can be viewed as a smooth compactly
supported function on a parabolic cylinder (extending a cusp region
of $X_{\tilde{\phi}})$. As the spectrum of the $L^{2}$ Laplacian
in this cylinder is bounded below by $\frac{1}{4}$ it follows as
in \cite[Lemma 4.2]{HideMagee} that
\[
\langle\Delta J_{i}f,J_{i}f\rangle\geq\frac{1}{4}\|J_{i}f\|_{2}^{2}\quad i=1,2.
\]
Inserting this above and rearranging gives
\begin{align}
0\leq\langle\Delta f',f'\rangle & \leq\langle\mathfrak{J}f,f\rangle+\lambda-\frac{1}{4}\left(\|J_{1}f\|_{2}^{2}+\|J_{2}f\|_{2}^{2}\right)\nonumber \\
 & =\langle\mathfrak{J}f,f\rangle+\lambda\|J_{0}f\|_{2}^{2}+\left(\lambda-\frac{1}{4}\right)\left(\|J_{1}f\|_{2}^{2}+\|J_{2}f\|_{2}^{2}\right)\label{eq:take-mass}\\
 & \leq\langle\mathfrak{J}f,f\rangle+\lambda\|f'\|_{2}^{2}.\label{eq:main-ineq}
\end{align}
Now we estimate
\[
|\langle\mathfrak{J}f,f\rangle|\leq\|\mathfrak{J}\|_{1}\max\left(\sup_{z\in C_{1},y(z)\leq3L_{1}}|f(z)|^{2},\sup_{z\in C_{2},y(z)\leq3L_{2}}|f(z)|^{2}\right)
\]
By Proposition \ref{prop:Linfty-bound} we have 
\[
\sup_{z\in C_{i},y(z)\leq3L_{i}}|f(z)|^{2}\ll_{c}n^{-c\sqrt{\frac{1}{4}-\lambda}}\left(1+\log\left(6n^{c}\right)\right)\ll_{c'}n^{-c'\sqrt{\frac{1}{4}-\lambda}}
\]
for any $c'<c$. Hence
\begin{equation}
|\langle\mathfrak{J}f,f\rangle|\ll_{c'}\|\mathfrak{J}\|_{1}n^{-c'\sqrt{\frac{1}{4}-\lambda}}.\label{eq:imtermediate_cusp_bound}
\end{equation}
 We hence turn to $\|\mathfrak{J}\|_{1}$. In $\CC_{1}$
\begin{align*}
\mathfrak{J} & =|\nabla J_{0}|^{2}+|\nabla J_{1}|^{2}=y^{2}\left(\frac{\partial J_{0}}{\partial y}\right)^{2}+y^{2}\left(\frac{\partial J_{1}}{\partial y}\right)^{2}.
\end{align*}
Since derivatives of $J$ and $\sqrt{1-J^{2}}$ are uniformly bounded
\[
\left(\frac{\partial J_{0}}{\partial y}\right)^{2},\left(\frac{\partial J_{1}}{\partial y}\right)^{2}\ll\frac{1}{L_{1}^{2}}.
\]
Hence 
\[
\mathfrak{J}\ll\frac{y^{2}}{L_{1}^{2}}.
\]
Since $\mathfrak{J}$ is supported on $y$ between $2L_{1}$ and $3L_{1}$
we have 
\[
\|\mathfrak{J}\|_{L^{1}(\CC_{1})}\ll\int_{0}^{L_{1}}\int_{2L_{1}}^{3L_{1}}\frac{y^{2}}{L_{1}^{2}}\frac{dy\wedge dx}{y^{2}}=1.
\]
Exactly the same bound holds in $\CC_{2}$ and hence for $\|\mathfrak{J}\|_{1}$,
so
\[
|\langle\mathfrak{J}f,f\rangle|\ll_{c'}n^{-c'\sqrt{\frac{1}{4}-\lambda}}.
\]
Using this in (\ref{eq:main-ineq}) gives
\[
\langle\Delta f',f'\rangle-\lambda\|f'\|^{2}\ll_{c'}n^{-c'\sqrt{\frac{1}{4}-\lambda}};
\]
now dividing by $\|f'\|^{2}$ and using (\ref{eq:mass-estimate})
gives
\begin{equation}
\frac{\langle\Delta f',f'\rangle}{\|f'\|^{2}}-\lambda\ll_{c',\lambda_{0}}n^{-c'\sqrt{\frac{1}{4}-\lambda}}.\label{eq:rayleigh1}
\end{equation}
\end{proof}

\section{Bumping to zero along tile edge\label{sec:Bumping-to-zero-along-edge}}

Recall that $\K$ is the fixed compact part of the fundamental domain
for $\Gamma$. Let $\mathcal{G}$ denote the set of lengths of geodesic
boundary segments of $\mathcal{K}$. Now let $\EE$ denote a infinite
geodesic edge of the tiling of $X_{\phi}$ by $F$-tiles. This edge
meets one or two cusps. The case of one cusp is similar so suppose
there are two, and call them $\CC_{1}$ and $\CC_{2}$ as in the previous
section bounded by closed horocycles of lengths $L_{1}$ and $L_{2}$
respectively.

Let $\kappa\eqdf\frac{1}{2}\sinh^{-1}\left(\frac{1}{8}\right)$ and
$\Omega\eqdf\frac{1}{2}\ell(\EE\cap K_{\phi})\in\frac{1}{2}\G$. We
will use Fermi coordinates in a neighborhood of $\EE$ of the form
\[
(\rho,t)\in[-\kappa,\kappa]\times\left[-(\Omega+\log(4L_{2})),(\Omega+\log(4L_{1}))\right]\xrightarrow{\theta}X_{\phi}
\]
where $\rho$ is the signed distance to $\EE$ and $t$ is the (signed)
arc-length along $\EE$: here we pick a point $p$ on $\EE$ midway
between the two boundary components of $K_{\phi}$ that $\EE$ intersects
for which $t(p)=0$ and we have oriented $\EE$ from $\CC_{2}$ to
$\CC_{1}$ in some way to fix the sign of $\rho$. 

Let $\T$ denote the image of this map $\theta$; the construction
of the constants we use ensure that $\theta$ is a diffeomorphism
to its image. In these coordinates the hyperbolic metric is 
\[
ds^{2}=d\rho^{2}+(\cosh\rho)^{2}dt^{2}.
\]

\begin{figure}
\includegraphics{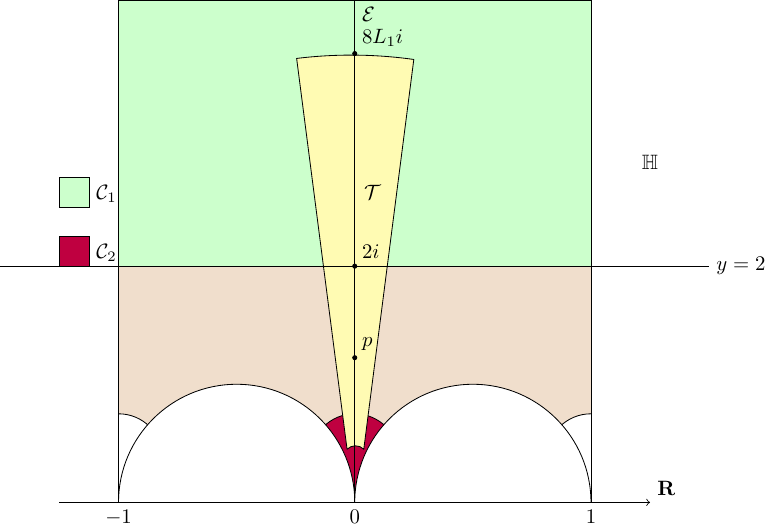}\caption{Illustration of Fermi tube $\mathcal{T}$}

\end{figure}

Given $f$ as in the previous section, let $f'=J_{0}f$ be the function
obtained from the previous section. Let $K_{\phi}^{+}$ denote the
union of $K_{\phi}$ and the regions of $\CC_{i}$ with $y(z)\leq4L_{i}$
in the standard model of each cusp. Then 
\[
K_{\phi}^{+}\supseteq\T.
\]

Let $J$ be as before, satisfying (\ref{eq:chi1})--(\ref{eq:chi3}).
In the above Fermi coordinates in $\T$, let $J^{*}(\rho,t)\eqdf\ensuremath{J\left(\frac{2|\rho|}{\kappa}\right)}$for
$|\rho|\leq\kappa$ and extend by constant value one to a function
on $K_{\phi}^{+}$. Both $J^{*}$ and $\sqrt{1-(J^{*})^{2}}$ are
smooth. Now, $J_{0}J^{*}$ is smooth on $K_{\phi}^{+}$ and extends
(by zero) to a smooth function on all of $X_{\phi}$.

Let $f''\eqdf J_{0}J^{*}f$ and we will write $J^{*}f'$ for this,
understanding what it means (understanding the product only requires
understanding $J^{*}$ in the support of $J_{0}$). 
\begin{prop}
\label{prop:Rayleigh2}With notation as above, for any $c'<c$ and
$\lambda<\lambda_{0}$
\[
\frac{\langle\Delta f'',f''\rangle}{\langle f'',f''\rangle}-\lambda\ll_{c',\lambda_{0}}n^{-c'\sqrt{\frac{1}{4}-\lambda}}.
\]
\end{prop}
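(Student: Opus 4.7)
My plan is to mirror the argument of Proposition~\ref{prop:Rayleigh_1}, with the geodesic cut-off $J^{*}$ playing the role that the cusp cut-offs $J_{i}$ played there. Set $\tilde{J}\eqdf\sqrt{1-(J^{*})^{2}}$, which is smooth by (\ref{eq:chi3}); then $\{J^{*},\tilde{J}\}$ is an IMS partition of unity on $X_{\phi}$ with $\tilde{J}$ supported in the Fermi tube $\T$. Applying Theorem~\ref{thm:IMS} to $f'$ and pairing with $f'$ yields
\[
\langle\Delta f'',f''\rangle=\langle\Delta f',f'\rangle-\langle\Delta(\tilde{J}f'),\tilde{J}f'\rangle+\langle Gf',f'\rangle\leq\langle\Delta f',f'\rangle+\langle Gf',f'\rangle,
\]
with $G\eqdf|\nabla J^{*}|^{2}+|\nabla\tilde{J}|^{2}$, the inequality using $\Delta\geq0$. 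Proposition~\ref{prop:Rayleigh_1} controls the first term.

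The task therefore reduces to estimating $\langle Gf',f'\rangle$, the mass loss $\|\tilde{J}f'\|^{2}=\|f'\|^{2}-\|f''\|^{2}$, and (via (\ref{eq:mass-estimate})) the lower bound on $\|f''\|^{2}$. All three quantities are handled once one has (i) an $L^{1}$ estimate on $G$ and an area estimate on $\T$, and (ii) a pointwise estimate on $|f|$ throughout $\T$. For (i), in the Fermi coordinates $(\rho,t)$ one reads off $|\partial_{\rho}J^{*}|,|\partial_{\rho}\tilde{J}|\ll1/\kappa\ll1$, and since the area element $\cosh(\rho)\,d\rho\wedge dt$ has $|\rho|\leq\kappa=O(1)$ while the $t$-range has length $\ll\Omega+\log(L_{1}L_{2})\ll\log n$ (using $L_{i}\leq n$ for a degree-$n$ cover), one gets $\mathrm{Area}(\T)\ll\log n$ and $\|G\|_{1}\ll\log n$. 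For (ii), $\T\subset K_{\phi}^{+}$ and Proposition~\ref{prop:Linfty-bound} applies throughout: directly on the compact-core portion, and on $\T\cap\CC_{i}$ using $y(z)\leq4L_{i}$ to bound the prefactor $(1+(y(z)/L_{i})\log(2y(z)n^{c}/L_{i}))$ by $O(\log n)$. The net outcome is $\sup_{\T}|f|^{2}\ll_{c'}n^{-c'\sqrt{\frac{1}{4}-\lambda}}$ for any $c'<c$, the $\log n$ being absorbed by slightly shrinking $c'$ (valid since $\sqrt{\frac{1}{4}-\lambda}\geq\sqrt{\frac{1}{4}-\lambda_{0}}>0$).

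Combining (i) and (ii) gives $\langle Gf',f'\rangle+\|\tilde{J}f'\|^{2}\ll_{c',\lambda_{0}}\log n\cdot n^{-c'\sqrt{\frac{1}{4}-\lambda}}$. From (\ref{eq:mass-estimate}) and this mass-loss bound, $\|f''\|^{2}\geq\tfrac{1}{2}\cdot c_{\lambda_{0}}/(2+c_{\lambda_{0}})$ for $n$ sufficiently large. Writing $\lambda\|f'\|^{2}=\lambda\|f''\|^{2}+\lambda\|\tilde{J}f'\|^{2}$, dividing the upper bound on $\langle\Delta f'',f''\rangle$ by $\|f''\|^{2}$, and once more absorbing the $\log n$ into a slightly smaller exponent produces the claimed bound.

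The only point requiring any care is step (ii): Proposition~\ref{prop:Linfty-bound} in the cusp direction carries a factor proportional to $y(z)/L$, and one must check that the truncation $y(z)\leq4L_{i}$ on $\T\cap\CC_{i}$ keeps this contribution at worst logarithmic in $n$, which it does. Everything else is a bookkeeping variant of the argument already performed in $\S$\ref{sec:Bumping-off-the-cusps}.
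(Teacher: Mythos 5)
Your argument mirrors the paper's proof line for line: the IMS decomposition of $\langle\Delta f'',f''\rangle$ via $\{J^{*},\sqrt{1-J^{*2}}\}$, the reduction to $\|G\|_{L^1(\T)}$ (equivalently, boundedness of the paper's $Q$ times $\mathrm{area}(\T)\ll\log n$), the application of Proposition~\ref{prop:Linfty-bound} on $\T\subset K_{\phi}^{+}$ with the $y(z)\leq4L_{i}$ truncation keeping the cusp prefactor $O(\log n)$, and the lower bound on $\|f''\|^{2}$ via (\ref{eq:mass-estimate}). The one small deviation is cosmetic: at the end you substitute $\lambda\|f'\|^{2}=\lambda\|f''\|^{2}+\lambda\|\tilde{J}f'\|^{2}$ and divide directly, whereas the paper manipulates the difference of Rayleigh quotients via an algebraic identity and then solves for $\langle\Delta f'',f''\rangle/\langle f'',f''\rangle$ — both routes yield the same estimate.
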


\begin{proof}
Let 
\[
Q\eqdf J_{0}^{2}\left(|\nabla J^{*}|^{2}+|\nabla\sqrt{1-J^{*2}}|^{2}\right)\geq0,
\]
$Q$ is supported in $\T$. Moreover, $Q$ is smooth and bounded in
$\T$ (independently of $\EE$). 

The IMS localization formula (Theorem \ref{thm:IMS}) applied to $f'$
tells us that
\begin{align*}
 & \langle\Delta f'',f''\rangle\\
 & =\langle\Delta f',f'\rangle+\langle Qf,f\rangle-\langle\Delta\sqrt{1-J^{*2}}f',\sqrt{1-J^{*2}}f'\rangle\\
 & \leq\langle\Delta f',f'\rangle+\langle Qf,f\rangle.
\end{align*}
Since $Q$ is supported in $\T$,
\begin{equation}
\langle Qf,f\rangle\ll\mathrm{area}(\T)\|f\lvert_{\T}\|_{\infty}^{2}.\label{eq:Q-matrix-ceof-bound}
\end{equation}
The area of $\T$ is 
\begin{align}
\int_{-\kappa}^{\kappa}\int_{-\Omega-\log(4L_{2})}^{\Omega+\log(4L_{1})}\cosh\rho dtd\rho & =\left(2\Omega+\log(4L_{1})+\log(4L_{2})\right)\int_{-\kappa}^{\kappa}\cosh\rho d\rho\nonumber \\
 & \ll\left(2\Omega+\log(4L_{1})+\log(4L_{2})\right)\nonumber \\
 & \ll\log n\label{eq:tube-area-bound}
\end{align}
where the last inequality used both $L_{1},L_{2}\leq n$.

Using Proposition \ref{prop:Linfty-bound} and (\ref{eq:tube-area-bound})
in (\ref{eq:Q-matrix-ceof-bound}) gives for $c'<c$
\[
\langle Qf,f\rangle\ll_{c'',\lambda_{0}}(\log n)n^{-c''\sqrt{\frac{1}{4}-\lambda}}\ll_{c',\lambda_{0}}n^{-c'\sqrt{\frac{1}{4}-\lambda}}.
\]
Hence
\begin{equation}
\langle\Delta f'',f''\rangle-\langle\Delta f',f'\rangle\ll_{c',\lambda_{0}}n^{-c'\sqrt{\frac{1}{4}-\lambda}}.\label{eq:energy-second}
\end{equation}

We also have
\[
\langle f'',f''\rangle=\langle f',f'\rangle-\langle J_{0}\sqrt{1-J^{*2}}f,J_{0}\sqrt{1-J^{*2}}f\rangle
\]
and since $J_{0}\sqrt{1-J^{*2}}$ is supported on $\T$ and universally
bounded, repeating the same arguments as before gives
\begin{equation}
\langle f'',f''\rangle-\langle f',f'\rangle\gg_{c',\lambda_{0}}-n^{-c'\sqrt{\frac{1}{4}-\lambda}}.\label{eq:mass-second}
\end{equation}
Combining the above gives
\begin{align*}
\frac{\langle\Delta f'',f''\rangle}{\langle f'',f''\rangle}-\frac{\langle\Delta f',f'\rangle}{\langle f',f'\rangle}= & \frac{1}{\langle f',f'\rangle}\left(\langle\Delta f'',f''\rangle-\langle\Delta f',f'\rangle\right)\\
 & +\frac{\langle\Delta f'',f''\rangle}{\langle f'',f''\rangle\langle f',f'\rangle}\left(\langle f',f'\rangle-\langle f'',f''\rangle\right)\\
\stackrel{\eqref{eq:mass-estimate}}{\ll_{c',\lambda_{0}}} & \max(\langle\Delta f'',f''\rangle-\langle\Delta f',f'\rangle,0)\\
 & +\frac{\langle\Delta f'',f''\rangle}{\langle f'',f''\rangle}\max(\langle f',f'\rangle-\langle f'',f''\rangle,0)\\
\stackrel{\eqref{eq:energy-second},\eqref{eq:mass-second}}{\ll_{c',\lambda_{0}}} & n^{-c'\sqrt{\frac{1}{4}-\lambda}}\left(1+\frac{\langle\Delta f'',f''\rangle}{\langle f'',f''\rangle}\right).
\end{align*}
This implies now rearranging and using Proposition \ref{prop:Rayleigh_1}
that
\begin{align*}
\frac{\langle\Delta f'',f''\rangle}{\langle f'',f''\rangle}\leq & \left(1+O_{c',\lambda_{0}}\left(n^{-c'\sqrt{\frac{1}{4}-\lambda}}\right)\right)\frac{\langle\Delta f',f'\rangle}{\langle f',f'\rangle}\\
 & +O_{c',\lambda_{0}}\left(n^{-c'\sqrt{\frac{1}{4}-\lambda}}\right)\\
\leq & \left(1+O_{c',\lambda_{0}}\left(n^{-c'\sqrt{\frac{1}{4}-\lambda}}\right)\right)\left(\lambda+O_{c',\lambda_{0}}\left(n^{-c'\sqrt{\frac{1}{4}-\lambda}}\right)\right)\\
 & +O_{c',\lambda_{0}}\left(n^{-c'\sqrt{\frac{1}{4}-\lambda}}\right)\\
 & =\lambda+O_{c',\lambda_{0}}\left(n^{-c'\sqrt{\frac{1}{4}-\lambda}}\right).
\end{align*}
\end{proof}

\section{Proof of Theorems \ref{thm:continuity} and \ref{thm:main}}
\begin{proof}[Proof of Theorem \ref{thm:continuity}]
Suppose that $\phi$ satisfies \textbf{Connected }and\\
 \textbf{GTF($c\log n$) }for $c>0$. Results of previous sections
establish these hold a.a.s. 

Let $G_{\phi}$ denote the $4$-regular Schreier graph constructed
from the side pairing congruences $A,B$ of the fundamental domain
$F$ acting on $[n]$ via $\phi$. Since we assume $X_{\phi}$ is
connected, it is realized as $\Gamma_{\phi}\backslash\HH$ and moreover
$G_{\phi}$ is connected. This graph $G_{\phi}$ encodes the dual
structure of the tiling of $X_{\phi}$ by $\K$-tiles. It has a distinguished
vertex $\otimes$ corresponding to the tile labeled `1' under $X_{\phi}=\Gamma\backslash_{\phi}\left(\HH\times[n]\right)$.
Let $o_{n}$ denote an element of this tile. Since $X_{\phi}$ deformation
retracts to a embedded copy of $G_{\phi}$, we have (via this deformation
retract) $\text{\ensuremath{\pi_{1}(G_{\phi},\otimes)\cong\pi_{1}(X_{\phi},o_{n})}=\ensuremath{\Gamma_{\phi}}}$.

The edges of $G_{\phi}$ are directed and labeled by $A$ and $B$.
Let $T$ denote a spanning tree for $G_{\phi}$. Gluing $\K$-tiles
according to $T$ gives a fundamental domain $F_{\phi}$ for $\Gamma_{\phi}$
in $\mathbb{H}$. Arbitrarily direct the elements of $E(G_{\phi}\backslash T)$
(one could take the previously given directions, for example). For
each $e\in E(G_{\phi}\backslash T)$ there is a unique homotopy class
of loop in $G_{\phi}$ beginning and ending at $\otimes$ that proceeds
(uniquely) to the start point of $e$ in $T$, traverses $e$ in its
given direction, then returns (uniquely) to $\otimes$ in $T$. These
loops gives a free basis $\B\eqdf\{\gamma_{e}:e\in E(G_{\phi}\backslash T)\}$
of the free group $\Gamma_{\phi}\cong\pi_{1}(G_{\phi},\otimes)$.
This is the basis described in Theorem \ref{thm:continuity}. These
elements are the side pairing congruences for $F_{\phi}$.

Let $\chi_{1}$ and $\chi_{2}$ be as in the statement of Theorem
\ref{thm:continuity} with Hamming distance one w.r.t. $\B$. There
is a one-to-one differential-operator-respecting correspondence between
smooth sections of $\L_{\chi_{i}}$ on $X_{\phi}$ and smooth functions
$f$ on $\HH$ such that 
\begin{equation}
f\circ\gamma=\chi_{i}(\gamma)f;\quad\gamma\in\Gamma_{\phi}\label{eq:equivariance}
\end{equation}
as discussed in $\S$\ref{sec:Set-up}. This correspondence implies
also that for any differential operator $D$ on $\HH$ that 
\begin{equation}
D[f\circ\gamma_{e}]=\chi_{i}(\gamma_{e})D[f];\quad e\in E(G_{\phi}\backslash T).\label{eq:equivariance+}
\end{equation}
These functions are uniquely determined by their restriction to $\overline{F_{\phi}}$
where they satisfy (\ref{eq:equivariance+}) for $D$ defined in any
neighborhood of $\partial F_{\phi}$ and such that both sides of (\ref{eq:equivariance+})
can be evaluated with arguments in $\partial F_{\phi}$. (This condition
is intended to make sure that the function on $\overline{F_{\phi}}$
extends to a\emph{ }\uline{smooth} $\Gamma_{\phi}$-periodic function
on $\HH$.) The conditions (\ref{eq:equivariance+}) are an ensemble
of boundary conditions, one for each side-pairing congruence $\gamma_{e}$
of $F_{\phi}$.

If $\chi_{1}$ and $\chi_{2}$ have Hamming distance one with respect
to $\B$, there is a unique $e_{0}\in E(G_{\phi}\backslash T)$ such
that $\chi_{1}$ and $\chi_{2}$ differ at $\gamma_{e_{0}}$. Let
$\EE$ denote the infinite geodesic in $X_{\phi}$ corresponding to
the edge $e_{0}$, and let $f$ be the eigensection for $\L_{\chi_{1}}$
corresponding to the bottom of its spectrum, $\lambda_{1}(\L_{\chi_{1}})$.
Let $f''$ be the section of $\L_{\chi_{1}}$ that is the output of
the previous $\S$\ref{sec:Bumping-off-the-cusps}--\ref{sec:Bumping-to-zero-along-edge}
applied to $f$. 

The fact that $f''$ is zero in a neighborhood of $\EE$ means that,
using the previously mentioned correspondences, if we restrict $f''$
to $F_{\phi}$ and then lift it to a section of $\L_{\chi_{2}},$
we obtain a section $F$ of $\L_{\chi_{2}}$ with (by Proposition
\ref{prop:Rayleigh2})
\[
\frac{\langle\Delta F,F\rangle}{\langle F,F\rangle}\leq\lambda_{1}(\L_{\chi_{1}})+O_{c',\lambda_{0}}\left(n^{-c'\sqrt{\frac{1}{4}-\lambda}}\right).
\]
By the variational principle this implies 
\[
\lambda_{1}(\L_{\chi_{2}})\leq\lambda_{1}(\L_{\chi_{1}})+O_{c',\lambda_{0}}\left(n^{-c'\sqrt{\frac{1}{4}-\lambda}}\right).
\]
\end{proof}
\begin{proof}[Proof of Theorem \ref{thm:main}]
Given any $\eta>0$ --- the `scale' at which we want to find many
$\lambda$ below $\frac{1}{4}$ --- let $c>0$ and $C\left(\frac{1}{4}-\eta\right)>0$
be as in Theorem \ref{thm:continuity}.

Using the results of $\S$\ref{sec:Probabilistic-input} we can find
$n>0$ such that 
\[
C\left(\frac{1}{4}-\eta\right)n^{-c\sqrt{\eta}}<\eta
\]
and $\phi\in\Hom(\Gamma,S_{n})$ such that $X_{\phi}$ is connected,
$X_{\phi}=\Gamma_{\phi}\backslash\HH$, and find a generating set
$\B$ of $\Gamma_{\phi}$ that satisfies the conclusion of Theorem
\ref{thm:continuity} applied with $\lambda_{0}=\frac{1}{4}-\eta$.
Moreover, we can ensure that for some fixed $\theta\in\Hom(\Gamma,\mu_{2})$
\begin{equation}
\lambda_{1}(X_{\phi,\chi(\theta)})\geq\frac{1}{4}-\eta\label{eq:spectral-gap}
\end{equation}
(since \textbf{Spectral Gap }holds a.a.s.). let $\chi_{0}$ denote
the trivial representation of $\Gamma_{0}$, so sections of $\L_{\chi_{0}}$
correspond to functions on
\[
X_{\phi,\chi_{0}}\cong X_{\phi}\sqcup X_{\phi}
\]
that are odd under the deck transformation swapping components. Hence
$\lambda_{1}(\L_{\chi_{0}})=0$ since there is a locally constant
section of $\L_{\chi_{0}}$.

According to the output of Theorem \ref{thm:continuity}, when we
move from $\chi_{0}$ to $\chi_{\theta}$ along a geodesic w.r.t.
$d_{\Ham}^{\B}$, at each step, $\lambda_{1}(\L_{\bullet})$ increases
by at most $\eta.$ But this value begins at zero and ends up at least
$\frac{1}{4}-\eta$, so its values must be $\eta$-dense in $[0,\frac{1}{4}]$.

Finally, if $\chi_{0},\chi_{1},\ldots,\chi_{r}=\chi(\theta)$ are
the result of moving along this geodesic, then
\begin{align*}
\spec(\Delta_{\L_{\chi_{i}}})\subset\spec(\Delta_{X_{\phi,\chi_{i}}}) & =\spec(\Delta_{X_{\phi}})\cup\spec(\Delta_{\L_{\chi_{i}}})\\
 & \subseteq\big[\frac{1}{4}-\eta,\infty\big)\cup\spec(\Delta_{\L_{\chi_{i}}})
\end{align*}
from (\ref{eq:spectral-gap}), so the corresponding $\lambda_{1}(X_{\phi,\chi_{i}})$
are also $\eta$-dense in $\left[0,\frac{1}{4}\right]$.
\end{proof}

\newpage{}

\appendix

\section{Common fixed points of subgroups under random homomorphisms to permutation
groups}

\global\long\def\Q{\mathcal{Q}}%

Let $\Gamma$ denote a finitely generated free group with a fixed
basis $\B$ and $H$ a finitely generated (necessarily free) subgroup.
For $\phi\in\Hom(\Gamma,S_{n})$, let $\mathrm{fix}_{H}$ denote the
number of points in $[n]$ fixed by every element of $\phi(H)\leq S_{n}$.
For a function $F$ on $\Hom(\Gamma,S_{n})$ we write $\E_{n}[F]$
for the expected value of $F$ with respect to the uniform probability
measure on the finite set $\Hom(\Gamma,S_{n})$. Let $\ell_{\B}(H)$
denote the minimum, over all bases $\B'$ of $H$, of the sum of word
lengths of $\B'$ w.r.t. $\B$.
\begin{prop}
\label{prop:expected_fixed_points_bound}If $\rank(H)\geq2$ and $n\geq\ell_{\B}(H)^{3}$,
then
\[
\E_{n}(\mathrm{fix}_{H})\ll\frac{\ell_{\B}(H)^{6}}{n}.
\]
\end{prop}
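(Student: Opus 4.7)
The plan is to reduce via the symmetry $\E_n[\mathrm{fix}_H] = n \cdot \mathbb{P}_n[1 \in \mathrm{Fix}(\phi(H))]$ to bounding the probability that $1 \in [n]$ is a common fixed point, and to analyze this event via Stallings' core graphs. Write $L \eqdf \ell_\B(H)$, and let $\Gamma_H^\B$ denote the Stallings core graph of $H$ with respect to $\B$: a based, folded, $\B$-edge-labeled finite graph with at most $L+1$ vertices and $L$ edges and first Betti number $\rank(H)\geq 2$, obtained by folding the wedge of loops corresponding to a minimal-length basis of $H$. The event ``$1$ is a common fixed point of $\phi(H)$'' is equivalent to the existence of a label-preserving graph morphism $f\colon \Gamma_H^\B \to G_\phi$ with $f(*)=1$, where $G_\phi$ is the Schreier graph of $\phi$ on $[n]$ (vertex set $[n]$, one directed $b$-labeled edge $i \to \phi(b)(i)$ for each $b \in \B$).

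By the first-moment bound $\mathbb{P}[\exists f] \leq \E[\#\{f\}]$, I would organize the expected number of morphisms by their folded labeled image $Q$, which is itself a based folded labeled quotient of $\Gamma_H^\B$. For such a $Q$ with $v_Q$ vertices and $e_b^Q$ edges labeled $b$ (so $e_Q = \sum_b e_b^Q$), the number of pointed injections $V(Q) \hookrightarrow [n]$ taking $*_Q$ to $1$ is at most $n^{v_Q - 1}$, and the probability that the independent random permutations $\{\phi(b)\}_{b \in \B}$ simultaneously satisfy all $e_b^Q$ prescriptions per label is $\prod_b \frac{(n-e_b^Q)!}{n!}$. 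Under the hypothesis $n \geq L^3$ an elementary Taylor estimate gives $\frac{(n-m)!}{n!} \leq C n^{-m}$ for every $m \leq L$ and an absolute $C>0$, so the contribution of each quotient $Q$ is $\ll n^{v_Q - e_Q - 1} = n^{-b_1(Q)}$.

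The crucial structural input is the lower bound $b_1(Q) \geq 2$ for every folded quotient $Q$ of $\Gamma_H^\B$: the quotient map is a morphism between folded $\B$-labeled graphs, hence a Stallings immersion, and therefore induces an injection of free groups $H = \pi_1(\Gamma_H^\B) \hookrightarrow \pi_1(Q) = F_{b_1(Q)}$. Since a free group of rank $\leq 1$ contains no non-abelian free subgroup while $H$ is free of rank $\geq 2$, one must have $b_1(Q) \geq 2$, making each contribution $\ll n^{-2}$.

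The main obstacle I expect is the final combinatorial step: the number of folded labeled quotients of $\Gamma_H^\B$ can be exponential in $L$ in the worst case, so to get the polynomial bound $O(L^6)$ one must estimate the sum $\sum_Q n^{-b_1(Q)}$ with care rather than by multiplying by a crude quotient count. The strategy is to stratify quotients by their vertex-identification pattern on $V(\Gamma_H^\B)$: each identification either strictly raises $b_1(Q)$ (producing an extra factor of $n^{-1}$ which, under $n \geq L^3$, absorbs the combinatorial multiplicity introduced by that identification) or merges edges while preserving $b_1(Q)$, but the latter is tightly constrained by the foldedness of $\Gamma_H^\B$. Balancing these contributions and using $n \geq L^3$ yields $\mathbb{P}_n[1 \in \mathrm{Fix}(\phi(H))] \ll L^6/n^2$, and multiplying by $n$ gives the claim.
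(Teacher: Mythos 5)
Your plan begins from essentially the same structural facts the paper imports from Puder--Parzanchevski \cite{PP15}: organize the first-moment count of label-preserving morphisms from the Stallings core graph of $H$ into the Schreier graph by folded quotients $Q$, observe that each contributes $\asymp n^{1-b_1(Q)}$, and use that immersions inject on $\pi_1$ to force $b_1(Q)\geq\rank(H)\geq 2$. All of that is correct and is precisely the content of \cite[Lemma 6.4]{PP15} (their $\chi(q)=V(q)-\sum_b E_b(q)$ is $1-b_1(Q)$). So up to re-derivation you and the paper agree on the shape of the quantity to be bounded: $\E_n[\mathrm{fix}_H] \ll \sum_Q n^{1-b_1(Q)}$ with every exponent $\leq -1$.

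The genuine gap is exactly the step you flag as ``the main obstacle'': controlling the number of quotients at each Betti-number level. Your proposed stratification argument (``each identification either strictly raises $b_1$, absorbing the multiplicity via an extra $n^{-1}$, or merges edges in a way tightly constrained by foldedness'') is a plausible-sounding sketch but is not a proof, and making it rigorous is nontrivial: after identifying vertices one must fold, and folding can collapse edges and vertices in ways that do not straightforwardly respect your dichotomy, so it is not clear that the claimed $O(L^{6}/n^{2})$ bound on the probability falls out of this bookkeeping. The paper explicitly notes that this combinatorial route is possible (citing Puder's detailed analysis) but chooses to \emph{bypass} it by an analytic trick you did not find: since the identity $\E_n[\mathrm{fix}_H]=\sum_q q(n)$ holds already for $n\geq \ell_\B(H)$ and $q(n)\asymp n^{\chi(q)}$ for $n\geq 2\ell_\B(H)^2$, one may plug in $n=m^3$ (with $m=\ell_\B(H)$) and use the trivial bound $\E_{m^3}[\mathrm{fix}_H]\leq m^3$ to conclude $N_\chi \ll m^{3(1-\chi)}$ where $N_\chi$ is the number of quotients with Euler characteristic $\chi$; then for $n\geq m^3$ the geometric sum $\sum_{\chi\leq -1} N_\chi n^{\chi} \ll \sum_{k\geq 1} m^3 (m^3/n)^k \ll m^6/n$ closes the argument. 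This one observation replaces the entire combinatorial stratification you would otherwise have to carry out, and it is the step missing from your proposal.
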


\begin{proof}
Let $m=\eqdf\ell_{\B}(H)$. In \cite[Lemma 6.4]{PP15}, Puder and
Parzanchevski show that there is a finite set $\Q_{H}$ of rational
functions, depending on $H$, such that:
\begin{itemize}
\item $\E_{n}[\mathrm{fix}_{H}]=\sum_{q\in\Q_{H}}q(n)$ for $n\geq\ell_{\B}(H)$
\item each $q\in\Q_{H}$ is of the form 
\[
q(n)=\frac{(n)_{V(q)}}{\prod_{b\in\B}(n)_{E_{b}(q)}}
\]
where $V(q),E_{b}(q)\leq\ell_{B}(H)$ and $(n)_{a}$ is the falling
Pochhammer symbol. Hence since (e.g. \cite[Lemma 5.23]{MageeNaudPuder})
if $a\leq\frac{1}{2}n$
\begin{equation}
n^{a}\left(1-\frac{a^{2}}{n}\right)\leq(n)_{a}\leq n^{a}\label{eq:poch}
\end{equation}
 we get for $n\geq2\ell_{\B}(H)^{2}$ that 
\[
q(n)\asymp n^{V(q)-\sum_{b\in\B}E_{b}(q)}
\]
in the regime of $n\to\infty$. Let $\chi(q)\eqdf V(q)-\sum_{b\in\B}E_{b}(q)$.
\item For $q\in\Q_{H},$ $\chi(q)\leq1-\rank(H)$.
\end{itemize}
In fact, Puder and Parzanchevski give a much more detailed description
of the set $\Q_{H}$ above, but this will not be needed here. It follows
immediately that for fixed $H$, $\E_{n}[\mathrm{fix}_{H}]=O(n^{\chi(H)})$
but we need a version of this statement that is uniform over (mildly)
varying $H$. It is possible to do this using the detailed description
of $H$ as was done in Puder \cite[\S 5]{PUDER} but we do not need
such fine estimates so can bypass this by analytic methods. A variant
form of the following trick appeared in \cite{MageeNaudPuder} for
a similar purpose.

Suppose $n\geq m^{3}$ and note from (\ref{eq:poch}) that
\[
m^{3}\geq\E_{m^{3}}[\mathrm{fix}_{H}]\gg\sum_{q\in\Q_{H}}(m^{3})^{\chi(q)}.
\]
Then supposing $\rank(H)\geq2$
\[
\E_{n}[\mathrm{fix}_{H}]\ll\sum_{q\in\Q_{H}}n^{\chi(q)}\leq\sum_{\chi=-1}^{-\infty}m^{3(1-\chi)}n^{\chi}\ll\frac{m^{6}}{n}
\]
as required.
\end{proof}
\bibliographystyle{amsalpha}
\bibliography{strong_convergence}

\noindent Michael Magee, \\
Department of Mathematical Sciences, Durham University, Lower Mountjoy,
DH1 3LE Durham, UK\\
IAS Princeton, School of Mathematics, 1 Einstein Drive, Princeton
08540, USA\\
\texttt{michael.r.magee@durham.ac.uk}~\\

\noindent 
\end{document}